\newtheorem{theorem}{Theorem}[section]
\newtheorem{proposition}[theorem]{Proposition}
\newtheorem{corollary}[theorem]{Corollary}
\newtheorem{lemma}[theorem]{Lemma}
\newtheorem{example}[theorem]{Example}
\def\v{\vert}
\def\si{\sigma}
\def\a{\ensuremath{\mathcal A}\xspace}
\def\s{\ensuremath{\mathcal S}\xspace}
\def\q{\ensuremath{\mathcal Q}\xspace}
\def\t{\ensuremath{\mathcal T}\xspace}
\def\o{\ensuremath{\mathcal O}\xspace}
\def\fc{\ensuremath{\mathcal F}\xspace}
\newcommand{\asc}{{\rm asc\,}}
\newcommand{\plat}{{\rm plat\,}}
\newcommand{\des}{{\rm des\,}}
\newcommand{\mqn}{\mathcal{Q}_n}
\newcommand{\msn}{\mathfrak{S}_n}
\newcommand{\mssn}{\mathfrak{S}_3}
\title{Restricted Stirling permutations}
\author[D.~Callan]{David Callan}
\address{Department of Statistics,
        University of Wisconsin-Madison,
        Madison, WI \ 53706-1532, U.S.A.}
\email{callan@stat.wisc.edu (D. Callan)}
\author[S.-M.~Ma]{Shi-Mei~Ma}
\address{School of Mathematics and Statistics,
        Northeastern University at Qinhuangdao,
         Hebei 066004, P.R. China}
\email{shimeimapapers@163.com (S.-M. Ma)}
\author[T.~Mansour]{Toufik Mansour}
\address{Department of Mathematics, University of Haifa, 3498838 Haifa, Israel}
\email{toufik@math.haifa.ac.il (T. Mansour)}
\begin{document}
\subjclass[2010]{05A05; 05A15; 40C15}
\keywords{Stirling permutations; Plateaus; Pattern avoidance}

\begin{abstract}
In this paper, we study the generating functions for the number of pattern restricted Stirling
permutations with a given number of plateaus, descents and ascents. Properties of the generating
functions, including symmetric properties and explicit formulas are studied. Combinatorial explanations are given for some equidistributions.
\end{abstract}

\maketitle

\section{Introduction and main results}
Let $\msn$ denote the symmetric group of all permutations of $[n]$, where $[n]=\{1,2,\ldots,n\}$.
A permutation $\sigma=\sigma_1\sigma_2\cdots\sigma_n\in\msn$ is said to {\it contain} another permutation $\tau=\tau_1\tau_2\cdots\tau_k\in \mathfrak{S}_k$ as a {\it pattern} if $\sigma$ has
a subsequence order-isomorphic to $\tau$, where $n\geq k$. If there is no such subsequence, then we say that $\sigma$ {\it avoids} the pattern $\tau$.
Pattern avoidance was first studied by Knuth~\cite{Knuth73} and he found that, for $\tau\in\mssn$,
the number of permutations in $\msn$ avoiding
$\tau$ is given by the $n$th Catalan number. Later, Simion and Schmidt~\cite{Simion85} determined the
number of permutations in $\msn$ simultaneously avoiding any given set of patterns $\tau\in\mssn$.
From then on, there has been a large literature devoted to this topic, see~\cite{Bona12,Mansour09} for instance.

Stirling permutations were introduced by Gessel and Stanley \cite{Gessel78}. A {\it Stirling permutation} of order $n$ is a permutation $\sigma$ of the multiset $\{1,1,2,2,\ldots,n,n\}$ such that
every element between the two occurrences of $i$ is greater than $i$ for each $i\in[n]$.
Denote by $\mqn$ the set of Stirling permutations of order $n$. Let $\sigma=\sigma_1\sigma_2\cdots\sigma_{2n-1}\sigma_{2n}\in\mqn$. Throughout this paper, we always let
\begin{align*}
&\des(\sigma)=\#\{i\mid 1\leq i\leq 2n-1\mbox{ and }\sigma_i>\sigma_{i+1}\},\\
&\asc(\sigma)=\#\{i\mid 1\leq i\leq 2n-1\mbox{ and }\sigma_i<\sigma_{i+1}\},\\
&\plat(\sigma)=\#\{i\mid 1\leq i\leq 2n-1\mbox{ and }\sigma_i=\sigma_{i+1}\}.
\end{align*}
denote the number of descents, ascents and plateaus of $\sigma$, respectively. Then the equations
$$C_n(x)=\sum_{\sigma\in\mqn}x^{\des(\sigma)+1}=\sum_{i=1}^nC(n,k)x^k$$
define the {\it second-order Eulerian polynomials} $C_n(x)$ and the {\it second-order Eulerian numbers} $C(n,k)$.
Let $\asc(\sigma)+1$ and $\des(\sigma)+1$ be the number of {\it augmented ascents} and {\it augmented descents} of
$\sigma$, respectively, that is, the number of ascents and descents when $\sigma$ is augmented with a 0 at the start and end. B\'ona \cite[Proposition~1]{Bona08} proved that the augmented ascents, augmented descents and plateaus are equidistributed over the set $\mqn$. Let $$C_n(p,q,r)=\sum_{\sigma\in\mqn}p^{\plat(\sigma)}q^{\des(\sigma)}r^{\asc(\sigma)}.$$
Janson~\cite[Theorem 2.1]{Janson08} discovered that the trivariate generating function
$qrC_n(p,q,r)$ is symmetric in $p,q,r$, which implies B\'ona's equidistributed result.

The notion of pattern avoidance can be extended to Stirling permutations in a straightforward way.
We say that $\sigma\in\mqn$ contains the pattern $\tau=\tau_1\tau_2\cdots\tau_k$ if for some
$1\leq i_1<i_2<\cdots<i_k\leq 2n$, we have $\sigma_{i_s}<\sigma_{i_t}$ whenever $\tau_s<\tau_t$. A Stirling
permutation is said to avoid any pattern it does not contain.
Let $\mathcal{Q}_{n}(\tau)$ be the set of Stirling permutations of order $n$ avoiding the pattern $\tau$.
Recently, Kuba and Panholzer~\cite{Kuba12} obtained enumerative formulas for Stirling permutations avoiding a set of patterns of length three. For example, it follows from~\cite[Theorem~1]{Kuba12} that
\begin{equation*}\label{Kuba-formula}
\begin{split}
\#\mathcal{Q}_{n}(213)&=\frac{1}{2n+1}\binom{3n}{n},\\
\#\mathcal{Q}_{n}(123)&=\mathcal{Q}_{n}(132)=\sum_{j=0}^n\frac{\binom{n}{j}\binom{n+j-1}{n-j}}{n+1-j}.
\end{split}
\end{equation*}
We denote the generating function for the number of Stirling permutations of order $n$ according to the number plateaus, descents and ascents by
$$C_{n,\tau}(p,q,r)=\sum_{\sigma\in\mqn(\tau)}p^{\plat(\sigma)}q^{\des(\sigma)}r^{\asc(\sigma)}.$$
We now present the three main results of this paper.

\begin{theorem}\label{mainthm01}
For $n\geq 1$, the generating function $qrC_{n,213}(p,q,r)$ is symmetric in $p,q,r$. Furthermore,
the number of Stirling permutations in $\mqn(213)$ with exactly $m$ ascents, $d$ descents and $k$ plateaus is given by
\begin{align}\label{eq04}
\left\{\begin{array}{ll}
\frac{1}{n}\binom{n}{m+1}\binom{n}{d+1}\binom{n}{k},&\mbox{ if }2n-1=m+d+k\\
0,&\mbox{otherwise}.\end{array}\right.
\end{align}
Moreover,
\begin{align}
\sum_{\sigma\in \mqn(213)}p^{\plat(\sigma)}=\frac{1}{n}\sum_{i=0}^{n-1}\binom{n}{i}\binom{2n}{n-1-i}p^{n-i}.\label{eq05}
\end{align}
\end{theorem}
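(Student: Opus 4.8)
The plan is to obtain \eqref{eq05} as a direct specialization of the refined enumeration \eqref{eq04}, since the left-hand side of \eqref{eq05} is precisely $C_{n,213}(p,1,1)$. First I would record the elementary identity
\begin{equation*}
\asc(\sigma)+\des(\sigma)+\plat(\sigma)=2n-1,\qquad \sigma\in\mqn,
\end{equation*}
which holds because the $2n-1$ consecutive pairs $(\sigma_i,\sigma_{i+1})$ are partitioned into ascents, descents and plateaus. Thus the only triples $(m,d,k)$ contributing to the trivariate polynomial are those with $m+d+k=2n-1$, and setting $q=r=1$ amounts to collecting together all $213$-avoiding Stirling permutations having a fixed number of plateaus.

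Next I would fix the plateau count $k$ and sum \eqref{eq04} over all admissible pairs $(m,d)$ with $m+d=2n-1-k$, so that the coefficient of $p^k$ in $\sum_{\sigma\in\mqn(213)}p^{\plat(\sigma)}$ equals
\begin{equation*}
\frac{1}{n}\binom{n}{k}\sum_{\substack{m,d\geq 0\\ m+d=2n-1-k}}\binom{n}{m+1}\binom{n}{d+1}.
\end{equation*}
After the substitution $a=m+1$ and $b=d+1$ the inner sum becomes $\sum_{a+b=2n+1-k}\binom{n}{a}\binom{n}{b}$, a Vandermonde convolution which I would evaluate to $\binom{2n}{2n+1-k}=\binom{2n}{k-1}$. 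This identifies the coefficient of $p^k$ as $\tfrac{1}{n}\binom{n}{k}\binom{2n}{k-1}$, and reindexing by $i=n-k$ (so that $i$ runs from $0$ to $n-1$ as $k$ runs from $n$ down to $1$) rewrites the coefficient of $p^{n-i}$ as $\tfrac{1}{n}\binom{n}{i}\binom{2n}{n-1-i}$, exactly matching \eqref{eq05}.

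Given that \eqref{eq04} is already in hand, this argument is essentially a bookkeeping exercise, so the main obstacle is not conceptual but a matter of care at the boundary of the Vandermonde summation: I must check that the terms $a=0$ and $b=0$, which lie outside the original ranges $m,d\geq 0$, contribute nothing. This is indeed the case, since for $1\leq k\leq n$ one has $2n+1-k>n$, forcing the complementary factor $\binom{n}{2n+1-k}$ to vanish; hence the restricted sum coincides with the full convolution. As a consistency check, the formula assigns coefficient $0$ to $p^{0}$, reflecting the fact that every Stirling permutation has at least one plateau (the pair $nn$). The genuinely hard part of the theorem therefore resides in establishing \eqref{eq04} itself, namely producing the three-parameter count of $213$-avoiding Stirling permutations, rather than in the plateau specialization \eqref{eq05}.
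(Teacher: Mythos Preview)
Your derivation of \eqref{eq05} from \eqref{eq04} is correct, including the boundary check on the Vandermonde convolution. However, this is not how the paper obtains \eqref{eq05}: rather than summing \eqref{eq04} over $m$ and $d$, the paper specializes the functional equation of Theorem~\ref{lem123a4} to $q=r=1$, obtaining $g=1+x(p-1+g)g^2$ for $g=C_{213}(x,p,1,1)$, and applies Lagrange Inversion a second time to read off the coefficient $\tfrac{1}{n}\binom{n}{k}\binom{2n}{k-1}$ directly. Your route is shorter and entirely elementary once \eqref{eq04} is in hand; the paper's route is self-contained for each formula but pays the price of a second (easy) Lagrange Inversion.

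That said, your proposal does not touch the other two assertions of the theorem---the symmetry of $qrC_{n,213}(p,q,r)$ in $p,q,r$ and the formula \eqref{eq04} itself---and you explicitly defer these. In the paper, both come from the same step: one checks that $g=qr(C_{213}(x,p,q,r)-1)$ satisfies the manifestly symmetric cubic $g=x(p+g)(q+g)(r+g)$, which yields the symmetry immediately and \eqref{eq04} via Lagrange Inversion on $[x^n]g=\tfrac{1}{n}[y^{n-1}](p+y)^n(q+y)^n(r+y)^n$. Without that (or an equivalent argument), your proposal is a proof of \eqref{eq05} conditional on \eqref{eq04}, not a proof of the full theorem.
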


\begin{theorem}\label{mainthm02}
For $n\geq 1$, the generating function $qC_{n,123}(p,q,r)$ is symmetric in $p,q$. Furthermore,
\begin{align}
\sum_{\sigma\in\mqn(123)}p^{\plat(\sigma)}=\frac{1}{n+1}\sum_{j=0}^{n}\binom{n+1}{j}\binom{2n-j}{n+j}p^{n-j}.\label{eq050}
\end{align}
\end{theorem}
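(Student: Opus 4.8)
The plan is to construct every element of $\mqn(123)$ from one of $\mathcal Q_{n-1}(123)$ by inserting the block $nn$, and to read all three statistics off this construction. The key structural fact is that, writing $\sigma=\alpha\,nn\,\beta$ (the two copies of the maximum are necessarily adjacent), we have $\sigma\in\mqn(123)$ if and only if $\alpha\beta\in\mathcal Q_{n-1}(123)$ and $\alpha$ is weakly decreasing: since $n$ is the largest letter it can only be the ``$3$'' of a forbidden $123$, so the one extra condition is that no ascent precede the inserted block. Hence inserting $nn$ into $\tau\in\mathcal Q_{n-1}(123)$ is legal exactly in the $a+1$ slots lying after the maximal weakly decreasing prefix $\tau_1\ge\cdots\ge\tau_a$. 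First I would record how each slot changes $(\plat,\des,\asc)$. A short case check shows three classes: the front slot, together with the prefix-boundary slot when $\tau$ has an ascent, contributes $(+1,+1,0)$; the $a-c$ slots interior to a constant run contribute $(0,+1,+1)$ and are exactly the insertions that split an existing plateau; the remaining slots---the strict descents of the prefix, and the end slot when $\tau$ is weakly decreasing---contribute $(+1,0,+1)$, where $c$ is the number of distinct prefix values. Iterating from the seed $11$, if $f,g,z$ count insertions of the three types then $f+g+z=n-1$ and
\[
(\plat,\des,\asc)=(1+f+g,\ f+z,\ g+z);
\]
in particular $\plat=n-z$, so a plateau is lost precisely at a splitting insertion. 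Equivalently, under the standard bijection with increasing plane trees one checks that $\sigma$ avoids $123$ iff the preorder word avoids $123$, that plateaus are leaves, and that $z$ counts the internal non-root nodes.

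For \eqref{eq050} it now suffices to find the distribution of $z$. I would introduce the refined series $T(u,x,p)=\sum_{m\ge1}u^{m}\sum_{\tau\in\mathcal Q_m(123)}p^{\plat(\tau)}x^{a(\tau)}$, with $x$ a catalytic variable marking the prefix length $a$, and translate the three-way slot classification into a functional equation. The front and boundary slots feed back $p\,x^{2}T$ and $pT$; the interior and descent slots, however, truncate the prefix to an arbitrary length $t$ and must be weighted by $p$ or $1$ according as $\tau_t\ne\tau_{t+1}$ or $\tau_t=\tau_{t+1}$. Summing $\sum_{t=1}^{a-1}x^{t}p^{[\tau_t\ne\tau_{t+1}]}$ produces the usual kernel-method boundary term $\tfrac{1}{1-x}\bigl(xT(u,1,p)-T(u,x,p)\bigr)$ plus a correction $(p-1)\sum_j x^{s_j}$ over the run boundaries $s_j=m_1+\cdots+m_j$ of the prefix composition. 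Solving the kernel equation and extracting $[u^n]$ by Lagrange inversion should then deliver $\tfrac1{n+1}\binom{n+1}{j}\binom{2n-j}{n+j}$ for the number of $\sigma\in\mqn(123)$ with $z=j$, which is exactly \eqref{eq050} after $\plat=n-j$. The main obstacle is precisely this run-boundary correction: because the weight of an interior insertion depends on the whole run structure of the prefix, the equation does not close in $x$ alone, and one must carry a second catalytic variable (or build the prefix run-by-run) and push the extra series through the kernel cancellation. I expect this bookkeeping, rather than the final Lagrange inversion, to be the crux.

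For the symmetry of $qC_{n,123}(p,q,r)$ in $p,q$, observe that $p\leftrightarrow q$ sends the monomial $p^{\plat}q^{\des+1}r^{\asc}=p^{1+f+g}q^{1+f+z}r^{g+z}$ to $p^{1+f+z}q^{1+f+g}r^{g+z}$, i.e.\ it acts on $(f,g,z)$ by the transposition $g\leftrightarrow z$ while fixing the exponent $\asc=g+z$ of $r$. Thus the claim is equivalent to the statement that $(f,g,z)$ and $(f,z,g)$ are equidistributed over $\mqn(123)$, and I would deduce it from the same functional equation by tracking $q$ and $r$ as well and checking the resulting trivariate series is invariant under $p\leftrightarrow q$. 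This cannot be seen fibrewise---from a single $\tau$ the numbers of $g$-slots (namely $c-1$ or $c$) and of $z$-slots (namely $a-c$) are generally unequal---so the exchange is genuinely global and must be read off the solved series, or else realized by a recursively defined involution interchanging the descent-runs and plateau-runs of the prefix. Establishing this exchange is the second, more delicate, obstacle; once the generating function of the previous paragraph is in hand it should follow by inspection.
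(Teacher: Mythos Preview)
Your insertion-of-$nn$ decomposition and the slot classification are correct, and the identity $(\plat,\des,\asc)=(1+f+g,\,f+z,\,g+z)$ follows. But note that this identity is invertible: $g=n-1-\des$ and $z=n-\plat$, so the statement ``$(f,g,z)$ and $(f,z,g)$ are equidistributed'' is literally the statement ``$\des+1$ and $\plat$ are equidistributed''. Your reformulation therefore adds no leverage toward the symmetry; it is a restatement, and you still owe either an involution or a closed functional equation. You acknowledge this, but then defer it (``should follow by inspection'' from a series you never actually write down).

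The same gap sits in front of \eqref{eq050}. Your catalytic series $T(u,x,p)$ does not close: the weight of the slot at position $t$ depends on whether $\tau_t=\tau_{t+1}$ or $\tau_t>\tau_{t+1}$, i.e.\ on the entire run profile of the prefix, not merely its length $a$. You see this and propose a second catalytic variable, but no equation is written, no kernel is cancelled, and no Lagrange inversion is performed. As it stands the proposal identifies the two hard steps and leaves both undone.

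By contrast, the paper's analytic proof chooses a catalytic variable that \emph{does} close in one variable: it conditions on the first letters, derives recurrences for $f(n\,|\,ii)$ that involve only $\sum_{j<i}f(\cdot\,|\,jj)$ with no run-structure correction, and solves the resulting kernel equation to obtain a cubic for $f=qC_{123}(x,p,q,r)-q+1$ that is visibly symmetric in $p,q$; specialising $q=r=1$ and one Lagrange inversion then gives \eqref{eq050}. The reason their variable closes and yours does not is that removing the initial block $ii$ shifts the problem to a smaller first letter without disturbing any finer structure, whereas inserting $nn$ at position $t$ forces you to remember which of the first $t$ gaps were plateaus. If you want to rescue your approach, the paper's separate combinatorial argument in fact supplies the missing involution: map $\sigma$ to the pair $(p(\sigma),s)$ recording the first-occurrence permutation and, for each left-to-right minimum $m_i$, the number $s_i$ of distinct letters between the two copies of $m_i$; the involution $s_i\mapsto c_i+1-s_i$ swaps $\plat$ with $\des+1$. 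That is the global exchange you were reaching for, and it does not pass through your $(f,g,z)$ parameters.
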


The symmetric properties of $qrC_{n,213}(p,q,r)$ and $qC_{n,123}(p,q,r)$ lead to the following corollary.

\begin{corollary}
For all $n\geq0$,
\begin{align}
&\sum_{\sigma\in \mqn(123)}q^{\des(\sigma)+1}=\sum_{\sigma\in \mqn(123)}q^{\plat(\sigma)}.\label{eq01}\\
&\sum_{\sigma\in \mqn(213)}q^{\asc(\sigma)+1}=\sum_{\sigma\in \mqn(213)}q^{\des(\sigma)+1}=\sum_{\sigma\in \mqn(213)}q^{\plat(\sigma)}.\label{eq02}
\end{align}
\end{corollary}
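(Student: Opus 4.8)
The plan is to read both identities off the symmetry statements of Theorems~\ref{mainthm01} and~\ref{mainthm02} by specializing variables, so that essentially no new combinatorics is needed. The only delicate point---and hence the ``hard part,'' such as it is---is the bookkeeping of the shift by one. By definition the augmented descent and augmented ascent statistics equal $\des(\sigma)+1$ and $\asc(\sigma)+1$, and for $n\geq 1$ prepending and appending a $0$ to $\sigma$ does create exactly one new ascent at the front and one new descent at the back, because every entry of $\sigma$ exceeds $0$. Consequently, multiplying $C_{n,\tau}(p,q,r)=\sum_{\sigma\in\mqn(\tau)}p^{\plat(\sigma)}q^{\des(\sigma)}r^{\asc(\sigma)}$ by $qr$ (respectively $q$) raises the exponents of $q$ and $r$ (respectively $q$) by one and yields exactly the generating function tracking plateaus together with augmented descents and augmented ascents; this is what lets the symmetrized polynomials be compared with the augmented statistics in \eqref{eq01} and \eqref{eq02}.

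For \eqref{eq02}, set $F(p,q,r)=qrC_{n,213}(p,q,r)$, which is symmetric in $p,q,r$ by Theorem~\ref{mainthm01}. Each univariate distribution appearing in \eqref{eq02} is a ``face'' of this symmetric polynomial: specializing $q=r=1$ returns $\sum_{\sigma\in\mqn(213)}p^{\plat(\sigma)}$, specializing $p=r=1$ returns $\sum_{\sigma\in\mqn(213)}q^{\des(\sigma)+1}$, and specializing $p=q=1$ returns $\sum_{\sigma\in\mqn(213)}r^{\asc(\sigma)+1}$. Since $F$ is invariant under every permutation of its three arguments, the polynomials $F(t,1,1)$, $F(1,t,1)$, and $F(1,1,t)$ coincide, and renaming $t$ to $q$ gives precisely \eqref{eq02}.

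The derivation of \eqref{eq01} is the same argument using only the weaker bivariate symmetry. Put $G(p,q,r)=qC_{n,123}(p,q,r)$, which is symmetric in $p$ and $q$ by Theorem~\ref{mainthm02}. Setting $q=r=1$ returns $\sum_{\sigma\in\mqn(123)}p^{\plat(\sigma)}$, while setting $p=r=1$ returns $\sum_{\sigma\in\mqn(123)}q^{\des(\sigma)+1}$; the invariance of $G$ under interchanging its first two arguments forces $G(t,1,1)=G(1,t,1)$, which is \eqref{eq01}. Since the whole argument is a formal specialization of two already-established symmetry theorems, I anticipate no substantive obstacle beyond the shift-by-one bookkeeping flagged above.
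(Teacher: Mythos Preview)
Your proposal is correct and is exactly the argument the paper intends: the corollary is stated immediately after Theorems~\ref{mainthm01} and~\ref{mainthm02} with the remark that ``the symmetric properties of $qrC_{n,213}(p,q,r)$ and $qC_{n,123}(p,q,r)$ lead to the following corollary,'' and your variable specializations $F(t,1,1)=F(1,t,1)=F(1,1,t)$ and $G(t,1,1)=G(1,t,1)$ just make this explicit. The only quibble is that both theorems assume $n\ge 1$, so the $n=0$ case of the corollary (where $\plat=\des=\asc=0$ on the empty word and the identities read $q=1$) is a boundary inconsistency in the paper rather than a gap in your reasoning.
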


\begin{theorem}\label{mainthm00}
For $n\geq0$,
$$\sum_{\sigma\in \mqn(132)}q^{\plat(\sigma)}=\sum_{\sigma\in \mqn(123)}q^{\plat(\sigma)}.$$
Moreover, the number of $132$-avoiding Stirling permutations of order $n$ with exactly $d$ descents is given by
$$\frac{\binom{n-1}{d}}{n+1}\sum_{j=0}^{n+1}\binom{n+1}{j}\binom{j}{d+1-j}.$$
\end{theorem}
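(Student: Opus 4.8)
The plan is to analyze $\mqn(132)$ through the position of the unique block $nn$ formed by the two largest entries, and to convert the resulting recursive structure into a system of functional equations that controls all three statistics at once; the two assertions are then read off by specialization.

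The first step is a structural lemma. Writing $\sigma=\alpha\,nn\,\beta$, I claim $\sigma$ avoids $132$ if and only if $\alpha$ and $\beta$ avoid $132$ and every value of $\alpha$ is at least every value of $\beta$. The point is that $n$ can only play the role of the ``$3$'' in a $132$-pattern, so the sole cross obstruction is an entry of $\alpha$ lying below an entry of $\beta$; ruling this out forces $\min(\alpha)\ge\max(\beta)$, and then all patterns reduce to internal ones. This condition splits $\sigma$ into two regimes. Either the value ranges split cleanly, so that $\alpha$ is a $132$-avoiding Stirling permutation on the top values and $\beta$ one on the bottom values; or a single value $t=\min(\alpha)=\max(\beta)$ \emph{straddles} the block, in which case the Stirling condition at $t$ forces $\beta=t\,\beta'$ with $\beta'\in\mqn[t-1](132)$, while $\alpha$ is a $132$-avoiding Stirling permutation carrying one extra copy of its own minimum.

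This straddle case obliges me to introduce an auxiliary class $\mathcal{R}_m$ of \emph{singleton-augmented} objects: a $132$-avoiding Stirling permutation of order $m$ into which one extra copy of a new global minimum has been inserted, subject to the two constraints that the insertion not fall inside any block (to preserve the Stirling property) and that the suffix following it be weakly increasing (to preserve $132$-avoidance). With $\mathcal{R}_m$ available, both regimes record the statistics additively: in each case $\plat(\sigma)=\plat(\alpha)+\plat(\beta)+1$, the block contributing the extra plateau, while the boundary pairs $\alpha\,|\,nn$ and $nn\,|\,\beta$ contribute the predictable extra ascent and descent. Translating the clean and straddle cases into generating functions yields a coupled pair of functional equations relating $C_{n,132}(p,q,r)$ to the generating function of $\bigcup_m\mathcal{R}_m$. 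For the plateau assertion I would set $q=r=1$, and for the descent assertion $p=r=1$.

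I expect the main obstacle to be exactly this auxiliary class: closing the generating function of $\mathcal{R}_m$ refined by the three statistics, since the ``not inside a block'' and ``weakly increasing suffix'' conditions interact with where the singleton sits and hence with the ascent/descent/plateau bookkeeping at the insertion gap. Once that function is pinned down and the system is solved---I anticipate a Lagrange inversion, consistent with the $\tfrac1n$ and $\tfrac1{n+1}$ prefactors in the target formulas---the conclusions follow by extraction. For the equidistribution I would verify that the resulting $\sum_{\sigma\in\mqn(132)}p^{\plat(\sigma)}$ coincides with the right-hand side of \eqref{eq050}, which Theorem~\ref{mainthm02} identifies with the $123$ distribution; alternatively this half should admit a direct plateau-preserving bijection $\mqn(132)\to\mqn(123)$. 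For the descent count I would extract the coefficient of $q^d$ and recognize the inner sum as a trinomial coefficient,
\[
\sum_{j}\binom{n+1}{j}\binom{j}{d+1-j}=[x^{d+1}](1+x+x^2)^{n+1},
\]
which, together with the factor $\tfrac{1}{n+1}\binom{n-1}{d}$ produced by the inversion, yields the stated closed form.
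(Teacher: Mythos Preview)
Your structural decomposition $\sigma=\alpha\,nn\,\beta$ is correct, and in particular you are right to isolate the straddle case where one value $t=\min\alpha=\max\beta$ crosses the block~$nn$; ignoring it gives the false equation $C-1=xp(1+r(C-1))(1+q(C-1))$, which already undercounts at $n=2$ (it misses $1221$). Your description of the auxiliary class $\mathcal{R}_m$---insert a new global minimum at a top-level gap of a $132$-avoiding Stirling permutation, with weakly increasing suffix---is also accurate. But the proposal stops exactly at the hard step: you never determine the trivariate generating function of $\mathcal{R}_m$, never write down the resulting system, and never carry out the Lagrange inversion you anticipate. The constraint ``weakly increasing suffix'' forces every top-level block after the insertion point to be a bare plateau $jj$ in increasing order of~$j$, so the generating function of $\mathcal{R}_m$ is not a simple transform of $C_{132}$; closing it requires an additional catalytic parameter, and you have not shown how to do this. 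As written, neither assertion of the theorem is proved.

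The paper's argument avoids the $nn$-decomposition altogether. It conditions instead on the \emph{initial} letters of $\sigma$, derives linear recurrences for the refinements $g(n\mid ii)$ (Lemmas~\ref{g132a1}--\ref{g132a2}), packages them via the catalytic variable $v$ into $L(x;v)=\sum_{n}\sum_i g(n\mid ii)v^{i-1}x^n$, and applies the kernel method to obtain the cubic functional equation of Theorem~\ref{th2g132} for $f=qC_{132}(x,p,q,r)-q+1$. Both parts of Theorem~\ref{mainthm00} then drop out by specialization: setting $q=r=1$ reproduces precisely the equation~\eqref{eqg123g1} satisfied by $xC_{123}(x,p,1,1)$, which gives the plateau equidistribution; setting $p=r=1$ and inverting yields the descent formula. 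For the first part the paper also supplies (Section~3.4) the direct bijection you allude to: the first-occurrence map $\psi$ of Section~\ref{comb123} restricts to a bijection $\mqn(132)\to\a_n$ sending ``number of plateaus'' to the same statistic on $\a_n$ as $\psi|_{\mqn(123)}$ does.
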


\section{Analytic proofs of the main theorems}
In this section, we present Analytic proofs of Theorems \ref{mainthm01}, \ref{mainthm02} and \ref{mainthm00}. More precisely, we find explicit formulas for the generating function $\sum_{n\geq0}C_{n,\tau}(p,q,r)x^n$ for $\tau\in \mathfrak{S}_3$. Since the reversal operation ($\sigma_1\sigma_2\cdots\sigma_{2n}\mapsto\sigma_{2n}\cdots\sigma_2\sigma_1$) preserves the set of Stirling permutations, we only need to consider the three cases, $\tau=123$, $\tau=132$,  $\tau=213$. For the latter case, we use the block decompositions technique (for instance, see \cite{MV}), while for the former two cases, we use the kernel method (for instance, see \cite{HouM}).

\subsection{The case $213$}\label{subsec213}
Define $$C_{213}(x,p,q,r)=\sum_{n\geq0}\sum_{\sigma\in \mqn(213)}x^np^{\plat(\sigma)}q^{\des(\sigma)}r^{\asc(\sigma)}.$$
Note that each nonempty Stirling permutation $\sigma$ that avoids $213$ can be represented as $\sigma'1\sigma''1\sigma'''$ such that
\begin{itemize}
\item each letter of $\sigma'$ is greater than each letter of $\sigma''$;
\item each letter of $\sigma''$ is greater than each letter of $\sigma'''$;
\item $\sigma',\sigma'',\sigma'''$ are Stirling permutations that avoid $213$.
\end{itemize}
Hence, by considering the 8 possibilities where one of $\sigma',\sigma'',\sigma'''$ is empty or not, we obtain that the generating function $C_{213}(x,p,q,r)$ satisfies
\begin{align*}
C_{213}(x,p,q,r)&=1+xp+x(pr+qr+pq)(C_{213}(x,p,q,r)-1)\\
&+xqr(r+p+q)(C_{213}(x,p,q,r)-1)^2+xq^2r^2(C_{213}(x,p,q,r)-1)^3,
\end{align*}
which leads to the following result.
\begin{theorem}\label{lem123a4}
The generating function $f=C_{213}(x,p,q,r)-1$ satisfies
$$f=xp+x(pr+qr+pq)f+xqr(r+p+q)f^2+xq^2r^2f^3.$$
\end{theorem}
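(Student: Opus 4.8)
The plan is to prove the functional equation directly from the block decomposition and then read off the stated cubic by the substitution $f = C_{213}(x,p,q,r)-1$. First I would justify the decomposition rigorously: writing a nonempty $213$-avoiding Stirling permutation as $\sigma = \sigma' 1 \sigma'' 1 \sigma'''$ by splitting at the two copies of the minimal letter $1$. The three listed conditions (the letters of $\sigma'$ exceed those of $\sigma''$, which in turn exceed those of $\sigma'''$, and each piece avoids $213$) are \emph{necessary}: if some letter $a$ of $\sigma'$ were below a letter $c$ of $\sigma''$, then the subsequence $a\,1\,c$ (with the first $1$ lying between them) is order-isomorphic to $213$, and symmetrically the second $1$ handles the $\sigma''$/$\sigma'''$ comparison; avoidance of each piece is inherited from $\sigma$. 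The conceptually delicate step is \emph{sufficiency}, i.e.\ that these conditions already force $\sigma$ to avoid $213$. I would settle this by a short case analysis on which block contains the largest (third) entry of a hypothetical $213$ occurrence: since the letters strictly decrease in value passing from $\sigma'$ to $\sigma''$ to $\sigma'''$ while the two $1$'s are globally minimal, in every case the three entries are forced into a single block, contradicting the assumed avoidance of that block.

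With the bijection in hand, the core of the argument is to track how $\plat$, $\des$, $\asc$ transform. Every adjacent pair of $\sigma$ lies either inside one block or at one of the junctions created by the two $1$'s, so the three statistics split as the sum of the corresponding statistics of $\sigma',\sigma'',\sigma'''$ plus explicit junction contributions that depend only on which blocks are empty: a nonempty $\sigma'$ contributes a descent before the first $1$; a nonempty $\sigma''$ contributes an ascent after the first $1$ and a descent before the second $1$; an \emph{empty} $\sigma''$ instead makes the two $1$'s adjacent and contributes a plateau; and a nonempty $\sigma'''$ contributes an ascent after the second $1$. Translating into the weight $p^{\plat(\sigma)}q^{\des(\sigma)}r^{\asc(\sigma)}$, replacing each nonempty block by $f$ and each empty block by $1$, and recording the new pair of $1$'s by a single factor $x$, the eight emptiness patterns group by the number of nonempty blocks: the all-empty case gives $xp$ (the permutation $11$), the three ``one nonempty'' cases give $x(pq+qr+pr)f$, the three ``two nonempty'' cases give $xqr(p+q+r)f^2$, and the all-nonempty case gives $xq^2r^2f^3$.

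Summing these contributions over all nonempty $\sigma$ yields exactly
\begin{align*}
C_{213}(x,p,q,r)-1 &= xp + x(pr+qr+pq)\bigl(C_{213}(x,p,q,r)-1\bigr)\\
&\quad + xqr(r+p+q)\bigl(C_{213}(x,p,q,r)-1\bigr)^2 + xq^2r^2\bigl(C_{213}(x,p,q,r)-1\bigr)^3,
\end{align*}
and setting $f = C_{213}(x,p,q,r)-1$ gives the asserted identity. The main obstacle I anticipate is not any single calculation but getting the junction bookkeeping exactly right across all eight cases---in particular correctly accounting for the plateau that appears precisely when $\sigma''$ is empty, and for the boundary descents and ascents that appear or vanish according to each block's emptiness---since an off-by-one here would corrupt the coefficients $pr+qr+pq$, $qr(r+p+q)$, and $q^2r^2$.
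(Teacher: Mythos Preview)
Your proposal is correct and follows essentially the same route as the paper: the block decomposition $\sigma'1\sigma''1\sigma'''$ with $\sigma'>\sigma''>\sigma'''$ and the eight-case junction bookkeeping are exactly what the paper invokes (tersely) just before stating the theorem. Your write-up simply supplies the details the paper omits---the necessity/sufficiency argument for the decomposition and the explicit tally of descent/ascent/plateau contributions at the junctions---and these details are all accurate.
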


\subsubsection{Proof of Theorem \ref{mainthm01}}\label{subsec13}
Theorem~\ref{lem123a4} shows that the generating function $$g=qr(C_{213}(x,p,q,r)-1)$$ satisfies
$g=x(p+g)(q+g)(r+g)$. Thus, the generating function $g$ is symmetric in $p,q,r$.
Moreover, by Lagrange Inversion Formula, we obtain that the coefficient of $x^n$ in $g$ is given by
\begin{align*}
[x^n]g&=\frac{1}{n}[y^{n-1}](p+y)^n(q+y)^n(r+y)^n\\
&=\frac{1}{n}\sum_{i=0}^n\sum_{j=0}^n\binom{n}{i}\binom{n}{j}\binom{n}{i+j+1}q^{n-i}r^{n-j}p^{i+j+1},
\end{align*}
which completes the proof of \eqref{eq04}.

If we let $g=C_{213}(x,p,1,1)$, then Theorem~\ref{lem123a4} gives $g=1+x(p-1+g)g^2$. Thus, by Lagrange Inversion Formula, we obtain that the coefficient of $x^n$ in $g$ is given by
\begin{align*}
[x^n]g&=\frac{1}{n}[y^{n-1}](p+y)^n(y+1)^{2n}=\frac{1}{n}\sum_{i=0}^{n-1}\binom{n}{i}\binom{2n}{n-1-i}p^{n-i}.
\end{align*}
Hence, the number of Stirling permutations in $\mqn(213)$ with exactly $k$ plateaus is given by $\frac{1}{n}\binom{n}{k}\binom{2n}{k-1}$,
which completes the proof.\hfill$\Box$

\subsection{The case $123$}
For short notation, we define $f(n)=C_{n,123}(p,q,r)$. Conditioning on the initial entries of
permutations, we define $$f(n|i_1i_2\cdots i_s)=\sum_{\sigma=i_1i_2\cdots i_s\sigma'\in\mqn(123)}x^np^{\plat(\sigma)}q^{\des(\sigma)}r^{\asc(\sigma)}.$$
\begin{lemma}\label{f123a1}
For all $n\geq2$,
$$f(n)-pqf(n-1)=\sum_{i=1}^nf(n|ii)+q(r-p)\sum_{i=1}^{n-1}f(n-1|ii).$$
\end{lemma}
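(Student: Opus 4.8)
The plan is to establish the recurrence in Lemma~\ref{f123a1} by classifying the $123$-avoiding Stirling permutations of order $n$ according to their first two entries. Every $\sigma\in\mqn(123)$ begins with some value $i$, and the two copies of $i$ behave differently depending on whether they are adjacent. First I would observe that since the letter $1$ must appear with both of its copies surrounded only by larger letters, and since $123$-avoidance forbids an increasing subsequence of length three, the first letter $\sigma_1=i$ is highly constrained; in fact the structure forces either $\sigma_1\sigma_2=ii$ (the two copies of $i$ are adjacent at the start) or a more restricted configuration. The decomposition $f(n)=\sum_{i=1}^n f(n\mid ii)+(\text{non-plateau starts})$ is the natural starting point.

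The heart of the argument is to show that the "non-plateau start" contributions assemble exactly into $pqf(n-1)+q(r-p)\sum_{i=1}^{n-1}f(n-1\mid ii)$. I would analyze a permutation $\sigma=i\,\sigma_2\cdots$ with $\sigma_2\neq i$. Because each element between the two $i$'s exceeds $i$ and $\sigma$ avoids $123$, I expect the second occurrence of $i$ to sit in a forced position relative to the block structure. The idea is to remove the letter $i$ (both copies) and pass to a $123$-avoiding Stirling permutation of order $n-1$ (after relabeling), tracking how $\plat$, $\des$, and $\asc$ change under this deletion. Deleting the two copies of $i$ typically destroys one plateau or alters one ascent/descent pair at the junctions where the copies sat, which is precisely what produces the weights $p$, $q$, $r$ in the claimed formula. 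The two sums on the right-hand side should correspond to the two regimes: the $pqf(n-1)$ term from the generic deletion (losing a plateau weight $p$ and a descent weight $q$, say), and the correction $q(r-p)\sum f(n-1\mid ii)$ accounting for the boundary cases where the order-$(n-1)$ image itself begins with a plateau $jj$, where the local descent/ascent/plateau statistics at the deletion site differ.

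The main obstacle will be the careful bookkeeping of the statistics $\plat$, $\des$, $\asc$ under deletion of the smallest-indexed initial letter, especially near the two positions vacated by the copies of $i$. The sign and the factor $(r-p)$ in the correction term strongly suggest an inclusion–exclusion between two nearly-identical contributions that cancel except at a plateau boundary, so I would set up the deletion map explicitly and compute the change in each of the three statistics in each local case (is $\sigma_2$ an ascent or forced, does the second copy of $i$ create a descent into a smaller following letter, etc.). Verifying that these local changes are independent of the rest of $\sigma$ — so that the generating function factors as stated — is the delicate step, and I would check small cases ($n=2,3$) explicitly to fix the correct assignment of $p,q,r$ weights before asserting the general identity.
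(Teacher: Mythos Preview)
Your overall decomposition---split $f(n)$ by the first letter $i$ and separate the plateau start $ii$ from the non-plateau start---matches the paper. The gap is in the handling of the non-plateau case. Two concrete problems:

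\emph{The second occurrence of $i$ is not in a forced position.} For $n=3$ and $i=1$, both $133122$ and $133221$ lie in $\mathcal{Q}_3(123)$ and begin with $1$ followed by a non-$1$. The second $1$ sits at position $4$ in one and position $6$ in the other, so your expectation that the second $i$ is pinned down by the $123$-avoidance and the Stirling condition is simply false.

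\emph{Deleting both copies of $i$ is not injective.} In the same example, removing the two $1$'s from $133122$ and from $133221$ yields $3322$ in both cases. So the map ``delete the two $i$'s and relabel'' cannot by itself produce a weight-preserving bijection to $\mathcal{Q}_{n-1}(123)$, and no amount of local bookkeeping of $p,q,r$ at the deletion sites will manufacture the factor $pq\,f(n-1)$ you are aiming for.

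The observation you are missing is about $\sigma_2$, not about the second $i$: if $\sigma_1=i<n$ and $\sigma_2\neq i$, then $\sigma_2>i$ (Stirling condition) and any $\sigma_2<n$ would create a $123$ pattern $i,\sigma_2,n$, so $\sigma_2\sigma_3=nn$ is forced. The paper therefore deletes $nn$ (always sitting at positions $2$--$3$), which \emph{is} a bijection onto permutations of order $n-1$ starting with $i$. One then refines by whether $\sigma_4=i$ or $\sigma_4=(n-1)$ (the only two possibilities, by the same argument one level down), giving
\[
f(n\mid inn)=qr\,f(n-1\mid ii)+pq\,f(n-1\mid i(n-1)(n-1)),
\]
and summing over $i$ and using $f(n-1)=\sum_i f(n-1\mid ii)+\sum_i f(n-1\mid i(n-1)(n-1))$ yields the lemma immediately. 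Redirect your deletion to the maximal letter $n$ rather than the initial letter $i$, and the argument becomes a two-line computation.
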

\begin{proof}
Clearly, $f(n)=\sum_{i=1}^nf(n|i)$, and
$f(n|i)=f(n|ii)+f(n|inn)$
(when $i<n$).
Thus, we obtain
\begin{align}\label{eqpqrst2}
f(n)=\sum_{i=1}^nf(n|ii)+\sum_{i=1}^{n-1}f(n|inn).
\end{align}
On the other hand, for all $n\geq2$, we have
\begin{align}
f(n|inn)&=f(n|inni)+f(n|inn(n-1)(n-1))\notag\\
&=qrf(n-1|ii)+pqf(n-1|i(n-1)(n-1)),\label{eqpqrst4}
\end{align}
which, by \eqref{eqpqrst2} and \eqref{eqpqrst4}, implies the required result.
\end{proof}

\begin{lemma}\label{f123a2}
For $1\leq i\leq n-2$ and $n\geq4$,
\begin{align*}
&f(n|ii)-2pqf(n-1|ii)+p^2q^2f(n-2|ii)\\
&=pq\sum_{j=1}^{i-1}f(n-1|jj)+pq(pr+qr-2pq)\sum_{j=1}^{i-1}f(n-2|jj)+p^2q^2(r-p)(r-q)\sum_{j=1}^{i-1}f(n-3|jj).
\end{align*}
Moreover, $f(n|nn)=pqf(n-1)$ and $f(n|(n-1)(n-1))=pqf(n-1)+p^2q(r-q)f(n-2)$.
\end{lemma}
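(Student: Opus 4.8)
The plan is to analyse a $123$-avoiding Stirling permutation through the adjacent pairs formed by its large values, using one weight-tracking operation throughout: reinstating (or deleting) the two copies of a value and renormalising the adjacent factors. First I would record the structure theorem. Write $\sigma=ii\tau\in\mqn(123)$. If two values exceeding $i$ appeared in increasing order, they would form a $123$ with the leading $i$; hence the values exceeding $i$ occur in weakly decreasing order, and with the Stirling condition this forces them to appear as the adjacent pairs $nn,(n-1)(n-1),\dots,(i+1)(i+1)$ in this fixed order. The same argument applied to the segment before the block $nn$ shows it equals $ii$ followed by adjacent pairs $m_1m_1\cdots m_tm_t$ with $i>m_1>\cdots>m_t$. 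I would also record the single weight identity needed repeatedly: reinstating a deleted $nn$ at a junction $a\to b$ replaces the one local factor of $a\to b$ by $r\cdot p\cdot q$, so the weight is multiplied by $rpq$ over that factor, i.e.\ by $pq$ at an ascent, by $rp=pq+p(r-q)$ at a descent, and by $rq=pq+q(r-p)$ at a plateau.

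The two ``moreover'' identities come out by deleting the leading block. For $f(n|nn)$, removing the front $nn$ is a weight-$pq$ bijection onto all of $\mqn(123)$ of order $n-1$ (the leading maximum creates no new $123$), so $f(n|nn)=pqf(n-1)$. For $f(n|(n-1)(n-1))$, removing the front $(n-1)(n-1)$ is again a bijection onto order $n-1$, with renormalising factor $pq$ unless the next letter is the global maximum $n$ (equivalently, the image permutation begins with its own maximal block), where the junction becomes an ascent and the factor is $pr$; correcting for that single case gives $f(n|(n-1)(n-1))=pqf(n-1)+(pr-pq)f(n-1|(n-1)(n-1))$, and substituting the already-proved $f(n-1|(n-1)(n-1))=pqf(n-2)$ yields $pqf(n-1)+p^2q(r-q)f(n-2)$.

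For the main recurrence I would condition on the pair following the initial $ii$, which by the structure theorem is either $nn$ or a small pair $mm$ with $m<i$, giving $f(n|ii)=f(n|iinn)+\sum_{m<i}f(n|iimm)$, and then contract the top block $nn$ with the weight rule above, conditioning on its successor exactly as in Lemma~\ref{f123a1}. Iterating this contraction deletes the two largest blocks $nn$ and $(n-1)(n-1)$ (here the hypotheses $i\le n-2$ and $n\ge4$ enter, ensuring these are two genuine distinct blocks), and the generic factor $pq$ of each deletion is what produces the operator $(1-pq\mathcal{E})^{2}$ on the left, namely $f(n|ii)-2pqf(n-1|ii)+p^2q^2f(n-2|ii)$, where $\mathcal{E}$ lowers the order by one. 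The forbidden-$123$ constraint forces each intervening small pair $jj$ with $j<i$ to abut a deleted block at a descent or a plateau junction, contributing the excess factor $p(r-q)$ or $q(r-p)$ over the generic $pq$; tracking the placements of such a pair against the two deleted blocks collapses the nested sums into $\sum_{j=1}^{i-1}f(\cdot|jj)$ and yields the right-hand side in the factored form $pq\,\mathcal{E}\,(1+p(r-q)\mathcal{E})(1+q(r-p)\mathcal{E})\sum_{j<i}f(\cdot|jj)$, which expands to the stated coefficients $pq$, $pq(pr+qr-2pq)$ and $p^2q^2(r-p)(r-q)$.

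The routine parts are the structure theorem and the single contraction identity. The main obstacle is the final step: eliminating the auxiliary quantities $f(\cdot|iinn)$ and $f(\cdot|ii(n-1)(n-1))$ across the orders $n,n-1,n-2,n-3$ and verifying that the junction-type accounting of each small pair against the two largest blocks, resummed over the decreasing small prefix, returns exactly the partial sums with the coefficients above rather than merely up to lower-order corrections; equivalently, justifying the clean factorisation of the resulting difference operator.
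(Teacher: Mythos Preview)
Your treatment of the two ``moreover'' identities is fine and matches the paper. The gap is in the main recurrence: your structure theorem is false, and the decomposition you build on it does not hold.

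You claim that in $\sigma=ii\tau\in\mqn(123)$ the segment preceding the block $nn$ equals $ii$ followed by adjacent pairs $m_1m_1\cdots m_tm_t$ with $i>m_1>\cdots>m_t$, and hence that the pair following $ii$ is either $nn$ or some $mm$ with $m<i$, giving
\[
f(n\mid ii)=f(n\mid iinn)+\sum_{m<i}f(n\mid iimm).
\]
This is not true. Take $n=4$, $i=2$ and $\sigma=2\,2\,1\,4\,4\,1\,3\,3$. One checks directly that $\sigma\in\mathcal{Q}_4(123)$ (the Stirling condition holds, and there is no strictly increasing triple), yet the segment before $44$ is $2\,2\,1$: the letter following $ii$ is a single $1$, not part of a plateau $11$. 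So $f(n\mid iim)$ with $m<i$ is \emph{not} supported only on words continuing with a second $m$, and your displayed decomposition undercounts. The small letters below $i$ are not forced into adjacent pairs before $nn$; only the letters exceeding $i$ are rigidified into the descending plateau sequence.

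The paper's proof avoids this by conditioning on the single next letter: after $ii$ the next symbol is either $n$ (hence $nn$) or some $j<i$, giving
\[
f(n\mid ii)=f(n\mid iinn)+pq\sum_{j<i}f(n-1\mid j),
\]
with the sums over $f(n-1\mid j)$ rather than $f(n-1\mid jj)$. One more contraction of the top block yields $f(n\mid ii)-pqf(n-1\mid ii)=pq\sum_{j<i}f(n-1\mid j)+p^{2}q(r-q)\sum_{j<i}f(n-2\mid j)$, and only then is Lemma~\ref{f123a1} (in its letter-by-letter form $f(m\mid j)-pqf(m-1\mid j)=f(m\mid jj)+q(r-p)f(m-1\mid jj)$ for $j<m-1$) invoked to convert $\sum_{j<i}f(\cdot\mid j)$ into $\sum_{j<i}f(\cdot\mid jj)$. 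That conversion step is exactly what your shortcut tried to bypass. If you repair the decomposition to use $f(n\mid iij)$ and then feed in Lemma~\ref{f123a1}, your ``delete the two top blocks'' picture and the clean factorisation $(1-pq\mathcal{E})^2$ on the left and $pq\mathcal{E}(1+p(r-q)\mathcal{E})(1+q(r-p)\mathcal{E})$ on the right do emerge, matching the statement.
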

\begin{proof}
By the definitions, we have $f(n|nn)=pqf(n-1)$. Thus,
$$f(n|(n-1)(n-1))=\sum_{j=1}^{n-2}f(n|(n-1)(n-1)j)+f(n|(n-1)(n-1)nn),$$
which implies
$$f(n|(n-1)(n-1))=pq\sum_{j=1}^{n-2}f(n-1|j)+prf(n-1|(n-1)(n-1))=pqf(n-1)+p^2q(r-q)f(n-2).$$
Now, let $1\leq i\leq n-2$, then
\begin{align*}
f(n|ii)&=\sum_{j=1}^{i-1}f(n|iij)+f(n|iinn)=pq\sum_{j=1}^{i-1}f(n-1|j)+f(n|iinn).
\end{align*}
Similarly,
$$f(n|iinn)=pqf(n-1|ii(n-1)(n-1))+p^2qr\sum_{j=1}^{i-1}f(n-2|j),$$
which implies
$$f(n|iinn)-pqf(n|ii(n-1)(n-1))=p^2qr\sum_{j=1}^{i-1}f(n-2|j).$$
Thus,
\begin{align}
f(n|ii)-pqf(n-1|ii)
&=pq\sum_{j=1}^{i-1}f(n-1|j)+p^2q(r-p)\sum_{j=1}^{i-1}f(n-2|j).\label{eqf123d1}
\end{align}
On the other hand, by Lemma \ref{f123a1}, we have that
$$\sum_{j=1}^{i-1}f(n|j)-pq\sum_{j=1}^{i-1}f(n-1|j)=\sum_{j=1}^{i-1}f(n|jj)+q(r-p)\sum_{j=1}^{i-1}f(n-1|jj).$$
Hence, by using \eqref{eqf123d1}, we complete the proof.
\end{proof}

Define $L_n(v)=\sum_{i=1}^nf(n|ii)v^{i-1}$. We now present the following result.
\begin{proposition}\label{fp123a1}
For all $n\geq4$,
\begin{align*}
&L_n(v)-2pqL_{n-1}(v)+p^2q^2L_{n-2}(v)\\
&\qquad=pqf(n-1)v^{n-2}(1+v)+p^2(rq-3q^2)f(n-2)v^{n-2}+p^2q^2f(n-2)v^{n-2}\\
&\qquad+\frac{pqv}{1-v}(L_{n-1}(v)-v^{n-3}L_{n-1}(1))+\frac{pq(qr+pr-2pq)v}{1-v}(L_{n-2}(v)-v^{n-3}L_{n-2}(1))\\
&\qquad+\frac{p^2q^2(r-p)(r-q))v}{1-v}(L_{n-3}(v)-v^{n-3}L_{n-3}(1)),\\
&f(n)-pqf(n-1)=L_n(1)+q(r-p)L_{n-1}(1),
\end{align*}
where $L_1(v)=p$, $L_2(v)=p^2(r+qv)$, $L_3(v)=p^3qr+p^2qr(2p+q)v+p^2q(pr+qr+pq)v^2$, $f(0)=1$, $f(1)=p$ and $f(2)=p(pq+pr+qr)$.
\end{proposition}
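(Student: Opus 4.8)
The plan is to obtain both displayed identities directly from Lemmas~\ref{f123a1} and~\ref{f123a2}. The second identity is nothing but Lemma~\ref{f123a1} rewritten through the observation that $L_m(1)=\sum_{i=1}^m f(m|ii)$, so that the two sums on the right of that lemma are exactly $L_n(1)$ and $L_{n-1}(1)$. The substance of the proposition is the three-term recurrence for $L_n(v)$. To prove it I would take the recurrence of Lemma~\ref{f123a2} for the combination $f(n|ii)-2pqf(n-1|ii)+p^2q^2f(n-2|ii)$, which holds for $1\leq i\leq n-2$, multiply it by $v^{i-1}$, and sum over $i$ from $1$ to $n-2$.

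The first thing to watch is that $\sum_{i=1}^{n-2}$ does not reproduce $L_n(v)-2pqL_{n-1}(v)+p^2q^2L_{n-2}(v)$ on the nose: that combination also carries the indices $i=n-1,n$ from $L_n$ and the index $i=n-1$ from $-2pqL_{n-1}$, to which Lemma~\ref{f123a2} does not apply. I would therefore isolate these boundary terms and evaluate them with the closed forms at the end of Lemma~\ref{f123a2}, namely $f(n|nn)=pqf(n-1)$, $f(n|(n-1)(n-1))=pqf(n-1)+p^2q(r-q)f(n-2)$, together with $f(n-1|(n-1)(n-1))=pqf(n-2)$. A short computation gives $pqf(n-1)v^{n-1}$ from $i=n$ and $pqf(n-1)v^{n-2}+p^2(rq-3q^2)f(n-2)v^{n-2}$ from $i=n-1$, and these collect into the terms $pqf(n-1)v^{n-2}(1+v)+p^2(rq-3q^2)f(n-2)v^{n-2}$ of the statement.

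Next I would convert the three partial sums on the right of the Lemma~\ref{f123a2} recurrence into the $L$-functions. For a fixed level $m\in\{n-1,n-2,n-3\}$, interchanging the order of summation and summing the resulting geometric series gives
\begin{align*}
\sum_{i=1}^{n-2}v^{i-1}\sum_{j=1}^{i-1}f(m|jj)=\sum_{j=1}^{n-3}f(m|jj)\,\frac{v^{j}-v^{n-2}}{1-v},
\end{align*}
and re-expressing $\sum_{j=1}^{n-3}f(m|jj)v^{j-1}$ and $\sum_{j=1}^{n-3}f(m|jj)$ through $L_m(v)$ and $L_m(1)$ produces the clean term $\frac{v}{1-v}\bigl(L_m(v)-v^{n-3}L_m(1)\bigr)$ plus a tail coming from the indices $j\geq n-2$ that belong to $L_m$ but not to the truncated inner sum. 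This tail is empty for $m=n-3$; for $m=n-2$ it is the single index $j=n-2$, whose two occurrences cancel; and for $m=n-1$ it consists of $j=n-2$ (again cancelling) and $j=n-1$, the latter leaving exactly $f(n-1|(n-1)(n-1))v^{n-2}=pqf(n-2)v^{n-2}$. Multiplying the three converted sums by their coefficients $pq$, $pq(pr+qr-2pq)$ and $p^2q^2(r-p)(r-q)$ then yields the three fractional terms of the statement, while the surviving $m=n-1$ tail, carrying the extra prefactor $pq$, supplies the remaining summand $p^2q^2f(n-2)v^{n-2}$.

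I expect the only real obstacle to be bookkeeping rather than ideas: because $L_m(v)$ runs up to index $m$ whereas the truncated inner sum stops at $n-3$, the tail correction genuinely depends on $m$, and it is precisely the $m=n-1$ tail that accounts for the otherwise puzzling $p^2q^2f(n-2)v^{n-2}$ term; treating the three levels uniformly is the easiest way to introduce an error. Finally, the listed values of $L_1(v),L_2(v),L_3(v)$ and of $f(0),f(1),f(2)$ are base cases, verified by enumerating the $123$-avoiding Stirling permutations of orders $1,2,3$ and reading off their plateaus, descents and ascents.
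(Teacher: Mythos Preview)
Your proposal is correct and follows exactly the route sketched in the paper: multiply the recurrence of Lemma~\ref{f123a2} by $v^{i-1}$, sum over $i=1,\dots,n-2$, and use Lemma~\ref{f123a1} for the second identity. The paper's own proof is a two-line sketch that omits all the bookkeeping you have carefully worked out; in particular, your identification of the boundary contributions at $i=n-1,n$ and of the tail at $j=n-1$ (which produces the $p^2q^2f(n-2)v^{n-2}$ term) is precisely what is needed to verify the statement.
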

\begin{proof}
The initial conditions can be obtained from the definitions. The recurrence relation for $L_n(v)$ is obtained by multiplying the recurrence relation for $f(n|ii)$ in Lemma \ref{f123a2} by $v^{i-1}$ and summing over $i=1,2,\ldots,n-2$. The recurrence relation for $f(n)$ follows immediately from Lemma \ref{f123a1}.
\end{proof}

Define $L(x;v)=\sum_{n\geq1}L_n(v)x^n$ and let $F(x)=C_{123}(x,p,q,r)$ (for short notation).
By multiplying the first recurrence in Proposition \ref{fp123a1} by $x^n$ and summing over $n\geq4$, we obtain
\begin{align*}
&L(x;v)-L_1(v)x-L_2(v)x^2-L_3(v)x^3-2pqx(L(x,v)-L_1(v)x-L_2(v)x^2)+p^2q^2x^2(L(x,v)-L_1(v)x)\\
&=pqx(F(xv)-1-f(1)xv-f(2)x^2v^2)+\frac{pqx}{v}(F(xv)-1-f(1)xv-f(2)x^2v^2)\\
&+p^2q(r-q)x^2(F(xv)-1-f(1)xv)-p^2q^2x^2(F(xv)-1-f(1)xv)\\
&+\frac{pqvx}{1-v}(L(x,v)-L_1(v)x-L_2(v)x^2-\frac{1}{v^2}(L(xv,1)-L_1(1)xv-L_2(1)x^2v^2))\\
&+\frac{pq(qr+pr-2pq)x^2v}{1-v}(L(x,v)-L_1(v)x-\frac{1}{v}(L(xv,1)-L_1(1)xv))\\
&+\frac{p^2q^2(r-p)(r-q)x^3v}{1-v}(L(x,v)-L(xv,1)),
\end{align*}
which, by several simple algebraic operations, implies
\begin{align}
&\left((1-pqx)^2-\frac{pqxv(1-(p-r)qx)(1-p(q-r)x)}{1-v}\right)L(x;v)\notag\\
&\qquad\quad=px(1-pqx)(1+xp(r-q))-\frac{pqx(1+pxv(r-q))(1+qxv(r-p))}{v(1-v)}L(xv;1)\label{eqLL01}\\
&\qquad\quad+\frac{pqx(1+v+pxv(r-2q))}{v}(F(xv)-1).\notag
\end{align}
By multiplying the second recurrence in Proposition \ref{fp123a1} by $x^n$ and summing over $n\geq2$, we obtain
\begin{align}
&(1-pqx)(F(x)-1)=(1+q(r-p)x)L(x;1).\label{eqLL02}
\end{align}
By finding $L(x;1)$ from \eqref{eqLL02} and using it to simplify \eqref{eqLL01}, we obtain the following result.

\begin{theorem}\label{th1f123}
The generating function $C_{123}(x,p,q,r)$ is given by
$$C_{123}(x,p,q,r)=1+\frac{1+q(r-p)x}{1-pqx}L(x;1),$$
where the generating function $L(x;v)$ satisfies
\begin{align*}
&\left((1-pqx)^2-\frac{pqxv(1-(p-r)qx)(1-p(q-r)x)}{1-v}\right)L(x;v)\\
&\qquad\quad\qquad\quad=\frac{px(1-pqx)(1+xp(r-q))(1-v(1-q))}{1-v}\left(1-(1-q)v-qvF(xv)\right).
\end{align*}
\end{theorem}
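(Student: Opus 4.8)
The plan is to obtain both assertions directly from the two functional equations \eqref{eqLL01} and \eqref{eqLL02}, which I treat as given. Since $F(x)=C_{123}(x,p,q,r)$ by definition, the first claim is essentially \eqref{eqLL02} rearranged: dividing \eqref{eqLL02} by $1-pqx$ and solving for $F(x)$ yields at once
$$C_{123}(x,p,q,r)=1+\frac{1+q(r-p)x}{1-pqx}L(x;1).$$
Thus all the substance lies in deriving the functional equation for $L(x;v)$, and the strategy is to use \eqref{eqLL02} to eliminate the term $L(xv;1)$ from \eqref{eqLL01}, leaving an equation in $L(x;v)$ and $F(xv)$ alone.

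First I would solve \eqref{eqLL02} for $L(x;1)$, namely $L(x;1)=\frac{(1-pqx)(F(x)-1)}{1+q(r-p)x}$, and then replace $x$ by $xv$ to express $L(xv;1)$ through $F(xv)$. Substituting this into the $L(xv;1)$-term of \eqref{eqLL01} triggers the key cancellation: the numerator factor $1+qxv(r-p)$ is identical to the denominator $1+q(r-p)xv$, so that term collapses to $-\frac{pqx(1+pxv(r-q))(1-pqxv)}{v(1-v)}(F(xv)-1)$, now written purely in terms of $F(xv)$.

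Next I would combine this with the other $(F(xv)-1)$-term, $\frac{pqx(1+v+pxv(r-2q))}{v}(F(xv)-1)$, already present in \eqref{eqLL01}. Placing the two over the common denominator $v(1-v)$, the polynomial coefficient of $(F(xv)-1)$ simplifies once one recognizes the identity $1+px(r-2q)-p^2qx^2(r-q)=(1-pqx)(1+px(r-q))$. Collecting the result with the $F$-free term $px(1-pqx)(1+xp(r-q))$ and factoring out $px(1-pqx)(1+xp(r-q))$ then assembles the numerator factor $1-(1-q)v$ together with the combination $1-(1-q)v-qvF(xv)$, yielding the stated right-hand side for $L(x;v)$.

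I expect this final simplification to be the main obstacle. Apart from verifying the factorization identity above, one must track the denominator $1-v$, the several sign changes, and the precise way in which the terms involving $F(xv)$ are grouped; the manipulation is entirely formal but error-prone, and it is the only point at which anything nontrivial occurs. No new combinatorial or analytic ingredient is required: everything reduces to \eqref{eqLL01}, \eqref{eqLL02}, and the polynomial identity.
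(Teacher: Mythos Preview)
Your approach is exactly the paper's: it says only ``By finding $L(x;1)$ from \eqref{eqLL02} and using it to simplify \eqref{eqLL01}'', and you have supplied the algebraic details, including the key factorization $(1-pqx)(1+px(r-q))=1+px(r-2q)-p^2qx^2(r-q)$. One caution: if you actually complete your own computation, the right-hand side comes out as $\dfrac{px(1-pqx)(1+xp(r-q))}{1-v}\bigl(1-(1-q)v-qvF(xv)\bigr)$, so the additional factor $(1-v(1-q))$ in the displayed statement does not arise from the algebra and appears to be a typo; your claim that the simplification ``assembles'' both factors is therefore slightly off, though this does not affect the subsequent kernel-method argument.
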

Theorem~\ref{th1f123} gives
\begin{align}
&\left((1-pqx/v)^2-\frac{pqx(1-(p-r)qx/v)(1-p(q-r)x/v)}{1-v}\right)L(x/v;v)\notag\\
&\qquad\quad\quad=\frac{px/v(1-pqx/v)(1+xp(r-q)/v)(1-v(1-q))}{1-v}\left(1-(1-q)v-qvF(x)\right),\label{th1f123eq1}
\end{align}
where $F(x)=1+\frac{(1+q(r-p)x)}{1-pqx}L(x;1)=C_{123}(x,p,q,r)$.
This type of functional equation can be solved systematically using the kernel method (see \cite{HouM} and references therein). In order to do that, we define
$$K(v)=(1-pqx/v)^2-\frac{pqx(1-(p-r)qx/v)(1-p(q-r)x/v)}{1-v}.$$
So, if we assume that $v=v_0=v_0(x,p,q,r)$ in Theorem \ref{th1f123} (we shall show that $v_0$ is the solution) such that $K(v_0)=0$, then \eqref{th1f123eq1} gives
\begin{align}
\frac{px/v_0(1-pqx/v_0)(1+xp(r-q)/v_0)(1-v_0(1-q))}{1-v_0}\left(1-(1-q)v_0-qv_0F(x)\right)=0,
\end{align}
which implies
\begin{align}
F(x)=C_{123}(x,p,q,r)=1+\frac{1-v_0}{qv_0},
\end{align}
where $v_0$ satisfies $K(v_0)=0$, that is,
$$-p^2q^2x^2(1-x(r-q)(r-p))+pqx(2+x((p+q)r-pq))v_0-(1+pqx)v_0^2+v_0^3=0.$$
If we set $f=qC_{123}(x,p,q,r)-q+1$, then $f=q+\frac{1-v_0}{v_0}-q+1=\frac{1}{v_0}$, which implies
$$-p^2q^2x^2(1-x(r-q)(r-p))f^3+pqx(2+x((p+q)r-pq))f^2-(1+pqx)f+1=0.$$
Hence, we can state the following result.

\begin{theorem}\label{th2f123}
The generating function $f=qC_{123}(x,p,q,r)-q+1$ satisfies
\begin{align*}
f=1+pqx(-1+(2+x(pr+qr-pq))f-pqx(1-x(p-r)(q-r))f^2)f.
\end{align*}
\end{theorem}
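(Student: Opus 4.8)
The plan is to obtain the cubic algebraic equation satisfied by $f$ directly from the functional equation of Theorem~\ref{th1f123} via the kernel method, and then to recognise that cubic as the stated nested identity after a one-line substitution. Writing $F(x)=C_{123}(x,p,q,r)$, I would first replace $x$ by $x/v$ in Theorem~\ref{th1f123} to reach equation \eqref{th1f123eq1}, in which $L(x/v;v)$ and $F(x)$ are coupled through the kernel
$$K(v)=(1-pqx/v)^2-\frac{pqx(1-(p-r)qx/v)(1-p(q-r)x/v)}{1-v}.$$
The kernel method then consists in choosing $v=v_0=v_0(x,p,q,r)$ so that $K(v_0)=0$. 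Since the prefactor multiplying the bracket on the right-hand side of \eqref{th1f123eq1} is generically nonzero, the vanishing of the left-hand side forces $1-(1-q)v_0-qv_0F(x)=0$, which solves to $F(x)=1+\frac{1-v_0}{qv_0}$.

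The one preliminary point that needs care is the selection of the correct branch $v_0$. Clearing the factor $1-v$ and the powers of $v$ in $K(v_0)=0$ turns it into the cubic
$$-p^2q^2x^2(1-x(r-q)(r-p))+pqx(2+x((p+q)r-pq))v_0-(1+pqx)v_0^2+v_0^3=0,$$
which at $x=0$ degenerates to $v_0^2(v_0-1)=0$; exactly one branch satisfies $v_0(0)=1$, and this is the branch for which $F(x)$ is a well-defined formal power series with $F(0)=1$, matching the empty Stirling permutation.

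With this branch fixed, I would set $f=qC_{123}(x,p,q,r)-q+1$ and substitute $F=1+\frac{1-v_0}{qv_0}$, which gives $f=1/v_0$, i.e.\ $v_0=1/f$. Substituting $v_0=1/f$ into the displayed cubic and multiplying through by $f^3$ yields
$$-p^2q^2x^2(1-x(r-q)(r-p))f^3+pqx(2+x((p+q)r-pq))f^2-(1+pqx)f+1=0.$$

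The final step is purely formal: expanding the right-hand side of the claimed identity in powers of $f$ gives $1-pqxf+pqx(2+x(pr+qr-pq))f^2-p^2q^2x^2(1-x(p-r)(q-r))f^3$, and comparing this with the cubic above using the elementary identities $(r-q)(r-p)=(p-r)(q-r)$ and $(p+q)r-pq=pr+qr-pq$ confirms that the two coincide term by term. I do not anticipate any genuine obstacle beyond the branch selection in the kernel step; once $v_0$ is pinned down as the root with $v_0(0)=1$, the remainder is bookkeeping.
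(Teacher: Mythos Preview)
Your proposal is correct and follows essentially the same route as the paper: substitute $x\to x/v$ in Theorem~\ref{th1f123}, kill the kernel $K(v)$ at $v=v_0$, read off $F(x)=1+\frac{1-v_0}{qv_0}$, convert the cubic in $v_0$ into a cubic in $f=1/v_0$, and identify the result with the stated formula. If anything, your treatment of the branch selection (pinning down $v_0$ via $v_0(0)=1$) is slightly more explicit than the paper's, which merely remarks parenthetically that the correct root will be exhibited.
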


\subsubsection{Proof of Theorem \ref{mainthm02}}
Let $f=q(C_{123}(x,p,q,r)-1)+1$. Then, Theorem \ref{th2f123} can be written as
$$f=\frac{1-p^2q^2x^2(1-x(r-q)(r-p))f^3}{1+pqx-pqx(2+x(qr+pr-pq))f},$$
which shows that the generating function $f$ is symmetric in $p,q$.

Now, assume that $g=xC_{123}(x,p,1,1)$. Then, Theorem \ref{th2f123} gives that the generating function $g$ satisfies
\begin{align}
g =\frac{x(1-pg+pg^2)}{(1-pg)^2}.\label{eqg123g1}
\end{align}
Then, by Lagrange Inversion Formula, we have that the coefficient of $x^n$ in $g$ is given by
$$[x^n]g=\frac{[y^{n-1}]}{n}\sum_{j=0}^n\binom{n}{j}\frac{p^jy^{2j}}{(1-py)^{n+j}},$$
which implies
\begin{align*}
[x^n]g&=\frac{[y^{n-1}]}{n}\sum_{j=0}^n\sum_{i\geq0}\binom{n}{j}\binom{n-1+j+i}{i}p^{j+i}y^{2j+i}\\
&=\frac{1}{n}\sum_{j=0}^{n}\binom{n}{j}\binom{2n-2-j}{n-1-2j}p^{n-1-j}.
\end{align*}
Hence, by the fact that  $C_{123}(x,p,1,1)=g/x$, we obtain that the generating function for the number of Stirling permutations of length $n$ that avoid $123$ according to the number plateaus is given by
$\frac{1}{n+1}\sum_{j=0}^{n}\binom{n+1}{j}\binom{2n-j}{n+j}p^{n-j}$.
Moreover, the number of Stirling permutations of length $n$ that avoid $123$ with exactly $k$ plateaus is given by
$\frac{1}{n+1}\binom{n+1}{k+1}\binom{n+k}{2n-k}$,
which proves \eqref{eq050}.\hfill$\Box$

\subsection{The case $132$}\label{subsec132}
Define $g(n)=C_{n,132}(p,q,r)$ and again use the notation
$$g(n|i_1i_2\cdots i_s)=\sum_{\sigma=i_1i_2\cdots i_s\sigma'\in\mqn(132)}x^np^{\textrm{plat}(\sigma)}q^{\textrm{des}(\sigma)}r^{\textrm{asc}(\sigma)}.$$
\begin{lemma}\label{g132a1}
For all $n\geq2$,
$$g(n)-prg(n-1)=\sum_{i=1}^ng(n|ii)+r(q-p)\sum_{i=1}^{n-1}g(n-1|ii).$$
\end{lemma}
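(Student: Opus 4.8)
The plan is to mirror the proof of Lemma~\ref{f123a1}, with the empty-bracket letter $i+1$ playing the role that the maximal letter $n$ plays there. First I would record the first-letter decomposition: writing $g(n)=\sum_{i=1}^n g(n|i)$ and conditioning on $\sigma_2$, either $\sigma_2=i$ (a plateau, counted by $g(n|ii)$) or $\sigma_2>i$, and in the latter case $132$-avoidance forces $\sigma_2=i+1$ --- if $\sigma_2=j>i+1$ then the letter $i+1$ occurs to the right and the subword $i\,j\,(i+1)$ is an occurrence of $132$. This yields the analogue of \eqref{eqpqrst2},
\[
g(n)=\sum_{i=1}^n g(n|ii)+\sum_{i=1}^{n-1} g(n|i(i+1)).
\]

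The heart of the matter is a recurrence for $g(n|i(i+1))$ in the spirit of \eqref{eqpqrst4}. First I would sharpen the structure: a $132$-avoider beginning with $i(i+1)$ must in fact begin with $i(i+1)(i+1)$, because if $\sigma_3=k>i+1$ then the second copy of $i+1$ lies to the right of $\sigma_3$ and $i\,k\,(i+1)$ is an occurrence of $132$. Hence the two copies of $i+1$ occupy the fixed positions $2,3$ with nothing between them, and deleting this pair (and lowering every larger letter by one) is a bijection onto $132$-avoiders of order $n-1$. Applying the same forcing once more shows the letter leaving the block, $\sigma_4$, can only be $i$ or $i+2$, so the image starts with $ii$ or with $i(i+1)$. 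Accounting for the steps altered near positions $1$--$4$ --- the removed ascent $i\to i+1$, the removed plateau $(i+1)(i+1)$, and the removed step out of the block, against the single reinstated step --- shows the weight rescales by $qr$ in the first case and by $pr$ in the second, so that
\[
g(n|i(i+1))=qr\,g(n-1|ii)+pr\,g(n-1|i(i+1)).
\]

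Finally I would sum the last identity over $1\le i\le n-1$, observe that the $i=n-1$ term of $\sum g(n-1|i(i+1))$ is zero (order $n-1$ has no letter $n$), and feed in the order-$(n-1)$ instance of the first display to rewrite $\sum_{i=1}^{n-2}g(n-1|i(i+1))$ as $g(n-1)-\sum_{i=1}^{n-1}g(n-1|ii)$. The coefficients $qr$ and $pr$ then collapse to $pr\,g(n-1)+r(q-p)\sum_{i=1}^{n-1}g(n-1|ii)$, and moving $pr\,g(n-1)$ to the left yields the stated formula. The hard part will be the middle step: checking that deletion of the pair $i+1$ is a genuine weight-preserving bijection. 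This rests on the two forcing facts ($\sigma_3=i+1$ and then $\sigma_4\in\{i,i+2\}$), on a clean tally of the four affected steps, and on verifying that reinserting $(i+1)(i+1)$ just after the leading $i$ never creates a new $132$, so that the inverse map stays inside the avoidance class; the remaining manipulations are routine.
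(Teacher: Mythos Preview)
Your proposal is correct and follows essentially the same route as the paper's proof: the decomposition $g(n|i)=g(n|ii)+g(n|i(i+1))$, the forcing $\sigma_3=i+1$ and $\sigma_4\in\{i,i+2\}$ yielding $g(n|i(i+1))=qr\,g(n-1|ii)+pr\,g(n-1|i(i+1))$, and the summation over $i$ are exactly the paper's steps \eqref{eq132pqrst2}--\eqref{eq132pqrst4}. You simply spell out the $132$-avoidance and Stirling-property justifications and the final telescoping that the paper leaves implicit under ``by the definitions'' and ``implies the required result.''
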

\begin{proof}
Clearly, $g(n)=\sum_{i=1}^ng(n|i)$, where
$g(n|i)=g(n|ii)+g(n|i(i+1))$.
Thus, we obtain
\begin{align}\label{eq132pqrst2}
g(n)=\sum_{i=1}^ng(n|ii)+\sum_{i=1}^{n-1}g(n|i(i+1)).
\end{align}
On the other hand, by the definitions, for all $n\geq2$, we have
\begin{align}
g(n|i(i+1))&=g(n|i(i+1)(i+1))=g(n|i(i+1)(i+1)i)+g(n|i(i+1)(i+1)(i+2))\notag\\
&=qrg(n-1|ii)+prg(n-1|i(i+1)),\label{eq132pqrst4}
\end{align}
which, by \eqref{eq132pqrst2} and \eqref{eq132pqrst4}, implies the required result.
\end{proof}

\begin{lemma}\label{g132a2}
For $1\leq i\leq n-1$ and $n\geq3$,
\begin{align*}
g(n|ii)-2prg(n-1|ii)&=pq\sum_{j=1}^{i-1}g(n-1|jj)+pqr(q-p)\sum_{j=1}^{i-1}g(n-2|jj)-p^2r^2g(n-2|ii)
\end{align*}
with $g(n|nn)=pqg(n-1)$.
\end{lemma}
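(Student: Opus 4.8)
The plan is to derive two first-order identities and then combine them, following the scheme used for $123$ in Lemma~\ref{f123a2}, but taking advantage of a simplification special to $132$-avoidance that makes the reduction cleaner. First I would dispose of the boundary value: a Stirling permutation counted by $g(n|nn)$ must begin with the two copies of the largest letter $n$, which are necessarily adjacent, so deleting them leaves an arbitrary member of $\mathcal{Q}_{n-1}(132)$, while the deleted block contributes one plateau and the drop to the following letter contributes one descent. This gives $g(n|nn)=pqg(n-1)$ at once.

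For the recurrence proper, the heart of the matter will be the first-order relation
$$g(n|ii)=pq\sum_{j=1}^{i-1}g(n-1|j)+pr\,g(n-1|ii)\qquad(1\le i\le n-1).$$
After the block $ii$, the letter $\sigma_3$ must lie in $\{1,\dots,i-1\}\cup\{i+1\}$: it is not $i$ (both copies are spent), and it cannot exceed $i+1$, since otherwise the leading $i$, this letter, and a later copy of $i+1$ would form a $132$. This splits $g(n|ii)=\sum_{j=1}^{i-1}g(n|iij)+g(n|ii(i+1))$. For $j<i$, deleting $ii$ and lowering every letter above $i$ by one is a weight-preserving bijection from the permutations counted by $g(n|iij)$ onto those counted by $g(n-1|j)$, the prefix contributing the factor $pq$; the point is that, since the image starts with the small letter $j<i$, any descent among letters exceeding $i$ would already be a $132$ in the image, so $132$-avoidance transfers with no side condition. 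For the last summand, the Stirling property forces the second copy of $i+1$ to follow immediately, so $g(n|ii(i+1))=g(n|ii(i+1)(i+1))$, and the map $ii(i+1)(i+1)\rho\mapsto ii\rho^-$ (delete the leading $ii$ and lower letters above $i$ by one, so that $(i+1)(i+1)$ becomes the new leading $ii$) is a weight-preserving bijection onto the permutations counted by $g(n-1|ii)$, with extra factor $pr$ (one new ascent $i\to i+1$ and one new plateau). Verifying that this last map respects $132$-avoidance in both directions is the step I expect to cost the most effort: here the image begins with the letter $i$ rather than with a letter below the relevant range, so one must check carefully that ``no descent among the letters exceeding $i$'' is exactly synchronised with $132$-avoidance of the order-$(n-1)$ image.

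I would then record the partial-sum analogue of Lemma~\ref{g132a1},
$$\sum_{j=1}^{i-1}g(n|j)-pr\sum_{j=1}^{i-1}g(n-1|j)=\sum_{j=1}^{i-1}g(n|jj)+r(q-p)\sum_{j=1}^{i-1}g(n-1|jj).$$
Since the identities $g(m|j)=g(m|jj)+g(m|j(j+1))$ and \eqref{eq132pqrst4} hold letter by letter, the derivation of Lemma~\ref{g132a1} localises verbatim to the range $1\le j\le i-1$: summing and eliminating the $g(\cdot|j(j+1))$ terms yields the displayed equation, the only care needed being the top index $j=i-1$ when it equals $m-1$, where the boundary value $g(m|mm)$ enters.

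Finally I would combine. Writing the first-order relation at orders $n$ and $n-1$ and substituting the second into $g(n|ii)-2pr\,g(n-1|ii)=pq\sum_{j=1}^{i-1}g(n-1|j)-pr\,g(n-1|ii)$ produces $pq(\sum_{j=1}^{i-1}g(n-1|j)-pr\sum_{j=1}^{i-1}g(n-2|j))-p^2r^2g(n-2|ii)$, whereupon the partial-sum identity rewrites the parenthesised difference as $\sum_{j=1}^{i-1}g(n-1|jj)+r(q-p)\sum_{j=1}^{i-1}g(n-2|jj)$; this is exactly the asserted recurrence. The boundary indices $i\in\{n-1,n-2\}$, where the lower-order instance of the first-order relation must be replaced by $g(m|mm)=pqg(m-1)$ or where $g(n-2|(n-1)(n-1))=0$, fold back into the same identities once one uses the full-range relation $\sum_{j=1}^{m}g(m|j)=g(m)$; I would verify these few cases directly.
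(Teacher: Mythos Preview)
Your proposal is correct and follows essentially the same route as the paper. Both arguments rest on the same first-order identity
\[
g(n\mid ii)=pq\sum_{j=1}^{i-1}g(n-1\mid j)+pr\,g(n-1\mid ii)\qquad(1\le i\le n-1),
\]
obtained from the decomposition $g(n\mid ii)=\sum_{j<i}g(n\mid iij)+g(n\mid ii(i+1)(i+1))$, and then eliminate $\sum_{j<i}g(n-1\mid j)$ and $\sum_{j<i}g(n-2\mid j)$ in favour of the $g(\cdot\mid jj)$ sums using \eqref{eq132pqrst4}. The paper performs this elimination inline (expanding $g(n-1\mid j)=g(n-1\mid jj)+g(n-1\mid j(j+1))$ and applying \eqref{eq132pqrst4} directly, then invoking the first-order identity at order $n-1$); you package the same manipulation as a ``partial-sum version of Lemma~\ref{g132a1}'' and then subtract $pr$ times the first-order identity at order $n-1$. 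The two computations are algebraically identical. Your bijective justifications for $g(n\mid iij)=pq\,g(n-1\mid j)$ and $g(n\mid ii(i+1)(i+1))=pr\,g(n-1\mid ii)$ are in fact more explicit than the paper's (which simply writes these down), and your handling of the boundary indices $i=n-1,n-2$ via $g(m\mid mm)=pq\,g(m-1)$ and $g(m\mid m(m+1))=0$ is appropriate.
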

\begin{proof}
By the definition $g(n|nn)=pqg(n-1)$. Let $1\leq i\leq n-1$, then
\begin{align*}
g(n|ii)&=\sum_{j=1}^{i-1}g(n|iij)+g(n|ii(i+1)(i+1))=pq\sum_{j=1}^{i-1}g(n-1|j)+prg(n-1|ii),
\end{align*}
which, by $g(n|i)=g(n|ii)+g(n|i(i+1))$, implies
\begin{align*}
g(n|ii)&=pq\sum_{j=1}^{i-1}g(n-1|jj)+pq\sum_{j=1}^{i-1}g(n-1|j(j+1))+prg(n-1|ii).
\end{align*}
Thus, by \eqref{eq132pqrst4}, we obtain
\begin{align*}
g(n|ii)-prg(n-1|ii)&=pq\sum_{j=1}^{i-1}g(n-1|jj)+pqr(q-p)\sum_{j=1}^{i-1}g(n-2|jj)\\
&+prg(n-1|ii)-p^2r^2g(n-2|ii).
\end{align*}
as required.
\end{proof}

\begin{proposition}\label{gp132a1}
Define $L_n(v)=\sum_{i=1}^ng(n|ii)v^{i-1}$. For all $n\geq3$,
\begin{align*}
L_n(v)-pqv^{n-1}g(n-1)-2prL_{n-1}(v)&=\frac{pqv}{1-v}(L_{n-1}(v)-v^{n-2}L_{n-1}(1))\\
&+\frac{pqr(q-p)v}{1-v}(L_{n-2}(v)-v^{n-2}L_{n-2}(1))-p^2r^2L_{n-2}(v),\\
g(n)-prg(n-1)&=L_n(1)+r(q-p)L_{n-1}(1),
\end{align*}
where $L_1(v)=p$ and $L_2(v)=p^2(r+qv)$, $g(0)=1$, $g(1)=p$ and $g(2)=p(pr+qr+pq)$.
\end{proposition}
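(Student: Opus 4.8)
The plan is to obtain both recurrences directly from Lemmas \ref{g132a1} and \ref{g132a2} by generating-function bookkeeping in the variable $v$, in exact parallel with the proof of Proposition \ref{fp123a1}. The initial conditions are the routine part: $L_1(v)=g(1|11)$ and $L_2(v)=g(2|11)+g(2|22)v$, and the constants $g(0),g(1),g(2)$ follow by listing the Stirling permutations of order at most $2$ (namely $11$, and $1122,1221,2211$, all of which avoid $132$) and reading off their statistics. This gives $g(1)=p$, $g(2)=p(pr+qr+pq)$, $L_1(v)=p$ and $L_2(v)=p^2(r+qv)$.

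The second recurrence is immediate. Since $L_n(1)=\sum_{i=1}^ng(n|ii)$ and $L_{n-1}(1)=\sum_{i=1}^{n-1}g(n-1|ii)$ by definition, Lemma \ref{g132a1} reads $g(n)-prg(n-1)=L_n(1)+r(q-p)L_{n-1}(1)$ verbatim, with nothing further to prove.

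For the first recurrence I would multiply the identity of Lemma \ref{g132a2} by $v^{i-1}$ and sum over $i=1,\ldots,n-1$, the range in which that identity holds. On the left, $\sum_{i=1}^{n-1}g(n|ii)v^{i-1}=L_n(v)-g(n|nn)v^{n-1}=L_n(v)-pqv^{n-1}g(n-1)$ using the boundary value $g(n|nn)=pqg(n-1)$, while $-2pr\sum_{i=1}^{n-1}g(n-1|ii)v^{i-1}=-2prL_{n-1}(v)$; this reproduces the left-hand side of the proposition. On the right, the term $-p^2r^2\sum_{i=1}^{n-1}g(n-2|ii)v^{i-1}$ collapses to $-p^2r^2L_{n-2}(v)$, since $g(n-2|ii)=0$ for $i\geq n-1$. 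The two remaining terms are partial sums of the form $\sum_{i=1}^{n-1}v^{i-1}\sum_{j=1}^{i-1}a_j$ with $a_j=g(n-1|jj)$ and $a_j=g(n-2|jj)$ respectively; interchanging the order of summation and using $\sum_{i=j+1}^{n-1}v^{i-1}=(v^j-v^{n-1})/(1-v)$ converts each into $\frac{v}{1-v}\bigl(\sum_j a_jv^{j-1}-v^{n-2}\sum_j a_j\bigr)$, which I expect to reduce to $\frac{pqv}{1-v}(L_{n-1}(v)-v^{n-2}L_{n-1}(1))$ and $\frac{pqr(q-p)v}{1-v}(L_{n-2}(v)-v^{n-2}L_{n-2}(1))$.

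The one delicate point, and the main (though routine) obstacle, is the bookkeeping of boundary terms in this interchange. Because Lemma \ref{g132a2} is valid only for $i\le n-1$ while $g(n|nn)$ satisfies a separate relation, the index ranges must be tracked carefully. Concretely, in the first partial sum the contribution $g(n-1|(n-1)(n-1))v^{n-1}$ arising from the discrepancy between $\sum_j a_jv^{j}$ and $vL_{n-1}(v)$ is exactly cancelled by the matching term in $v^{n-1}\sum_j a_j$, so that the clean form $L_{n-1}(v)-v^{n-2}L_{n-1}(1)$ emerges; in the second partial sum no such correction appears because $g(n-2|jj)$ already vanishes beyond $j=n-2$. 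Verifying this cancellation completes the derivation of the first recurrence and hence the proposition.
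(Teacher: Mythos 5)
Your proposal is correct and follows exactly the paper's route: the second recurrence is read off from Lemma \ref{g132a1}, and the first is obtained by multiplying the identity of Lemma \ref{g132a2} by $v^{i-1}$ and summing over $i=1,\ldots,n-1$, which is precisely what the paper does (your careful tracking of the boundary cancellation involving $g(n-1|(n-1)(n-1))$, and of the vanishing of $g(n-2|ii)$ for $i\ge n-1$, only makes explicit what the paper's two-line proof leaves to the reader).
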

\begin{proof}
The initial conditions can be obtained from the definitions. By Lemma \ref{g132a1}, we have $g(n)-prg(n-1)=L_n(1)+r(q-p)L_{n-1}(1)$.
By multiplying the recurrence relation in statement of Lemma \ref{g132a2} by $v^{i-1}$ and summing over $i=1,2,\ldots,n-1$, we obtain the recurrence relation for $L_n(v)$.
\end{proof}

Define $L(x;v)=\sum_{n\geq1}L_n(v)x^n$ and let $F(x)=C_{132}(x,p,q,r)$ (for short notation).
By multiplying the recurrences in Proposition \ref{gp132a1} by $x^n$ and summing over $n\geq3$, we obtain
\begin{align*}
&\left((1-prx)^2-\frac{pqvx(1+r(q-p)x)}{1-v}\right)L(x;v)\\
&\qquad\qquad\qquad=px(1-prx)+pqx(F(xv)-1)-\frac{pqx}{1-v}\left(1+r(q-p)vx\right)L(xv;1),\\
&(1-prx)F(x)=(1+r(q-p)x)L(x;1)+1-prx.
\end{align*}
Hence, we can state the following result.
\begin{theorem}\label{th1g132}
The generating function $C_{132}(x,p,q,r)$ is given by
$$C_{132}(x,p,q,r)=1+\frac{1+r(q-p)x}{1-prx}L(x;1),$$
where the generating function $L(x;v)$ satisfies
\begin{align*}
&\left((1-prx)^2-\frac{pqvx(1+r(q-p)x)}{1-v}\right)L(x;v)\\
&\qquad\qquad=px(1-prx)-\frac{pqxv(1-prx)(1+(q-p)rxv)}{(1-prxv)(1-v)}L(xv;1).
\end{align*}
\end{theorem}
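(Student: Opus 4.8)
The plan is to derive both assertions directly from the two functional equations displayed immediately before the statement, which I will refer to as (A) (the equation whose right-hand side involves $F(xv)-1$ and $L(xv;1)$) and (B) (the relation $(1-prx)F(x)=(1+r(q-p)x)L(x;1)+1-prx$), both obtained from Proposition~\ref{gp132a1}. Note that the left-hand side of (A) already coincides with the left-hand side of the functional equation in the statement, so no work is needed there; the entire content of the proof is to produce the stated right-hand side and the closed form for $F(x)$.

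First I would establish the closed form for $C_{132}(x,p,q,r)=F(x)$. This is immediate from (B): dividing through by $1-prx$ gives $F(x)=1+\frac{1+r(q-p)x}{1-prx}L(x;1)$, which is exactly the first displayed formula. Next I would eliminate $F(xv)$ from (A). Replacing $x$ by $xv$ in the formula just obtained yields $F(xv)-1=\frac{1+r(q-p)xv}{1-prxv}L(xv;1)$, and I substitute this into the right-hand side of (A). The resulting right-hand side then contains two terms proportional to $L(xv;1)$, namely $pqx\cdot\frac{1+r(q-p)xv}{1-prxv}L(xv;1)$ coming from the $F(xv)-1$ term and $-\frac{pqx(1+r(q-p)vx)}{1-v}L(xv;1)$ already present in (A).

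Both of these share the common factor $pqx(1+r(q-p)xv)L(xv;1)$, and the difference of the two denominators combines cleanly: $\frac{1}{1-prxv}-\frac{1}{1-v}=\frac{(1-v)-(1-prxv)}{(1-prxv)(1-v)}=\frac{-v(1-prx)}{(1-prxv)(1-v)}$. Pulling out the common factor and using this identity collapses the two terms into the single expression $-\frac{pqxv(1-prx)(1+(q-p)rxv)}{(1-prxv)(1-v)}L(xv;1)$, which together with the surviving term $px(1-prx)$ yields precisely the right-hand side in the statement. The only real obstacle is the bookkeeping in this last step, namely correctly factoring out $(1+r(q-p)xv)=(1+(q-p)rxv)$ and simplifying $\frac{1}{1-prxv}-\frac{1}{1-v}$; but this is routine algebra rather than a conceptual difficulty, so I expect the argument to go through without surprises.
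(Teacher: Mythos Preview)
Your proposal is correct and follows precisely the route the paper intends: the paper simply states ``Hence, we can state the following result'' after displaying (A) and (B), and you have supplied the omitted algebra---solving (B) for $F(x)$, substituting $F(xv)-1=\frac{1+r(q-p)xv}{1-prxv}L(xv;1)$ into (A), and combining the two $L(xv;1)$ terms via the identity $\frac{1}{1-prxv}-\frac{1}{1-v}=\frac{-v(1-prx)}{(1-prxv)(1-v)}$. There is nothing to add.
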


Theorem \ref{th1g132} gives
\begin{align}
&\left((1-prx/v)^2-\frac{pqx(1+r(q-p)x/v)}{1-v}\right)L(x/v;v)\\
&\qquad=px(1-prx/v)/v-\frac{pqx(1-prx/v)(1+(q-p)rx)}{(1-prx)(1-v)}L(x;1),\label{th1f132eq1}
\end{align}
where $1+\frac{1+(q-p)rx}{1-prx}L(x;1)=C_{132}(x,p,q,r)$.
This type of functional equation can be solved systematically using the kernel method (see \cite{HouM} and references therein). In order to do that, we define
$$K(v)=(1-prx/v)^2-\frac{pqx(1+r(q-p)x/v)}{1-v}.$$
So, if we assume that $v=v_0=v_0(x,p,q,r)$ in \eqref{th1f132eq1} (we shall show that $v_0$ is the solution) such that $K(v_0)=0$, then \eqref{th1f132eq1} gives
$$L(x,1)=\frac{1-v_0}{qv_0}\frac{1-prx}{1+(q-p)rx}$$
and
$$C_{132}(x,p,q,r)=1+\frac{1-v_0}{qv_0},$$
where $v_0$ satisfies
$$-p^2r^2x^2+rpx(prx-pqx+q^2x+2)v_0-(1+2prx+pqx)v_0^2+v_0^3=0.$$
So $f=qC_{132}(x,p,q,r)-q+1=\frac{1}{v_0}$, which implies
$$-p^2r^2x^2f^3+rpx(prx-pqx+q^2x+2)f^2-(1+2prx+pqx)f+1=0.$$
Hence, we can state the following result.
\begin{theorem}\label{th2g132}
The generating function $f=qC_{132}(x,p,q,r)-q+1$ satisfies
\begin{align*}
f=1+px\left(q-2r+r(2+(pr-pq+q^2)x)f-pr^2xf^2\right)f.
\end{align*}
\end{theorem}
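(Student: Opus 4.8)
The plan is to derive the stated identity as a direct algebraic consequence of the kernel-method computation already in place. Recall that the substitution root $v_0$ has been shown to satisfy
\[
v_0^3-(1+2prx+pqx)v_0^2+rpx(prx-pqx+q^2x+2)v_0-p^2r^2x^2=0,
\]
and that $f=qC_{132}(x,p,q,r)-q+1=1/v_0$. Hence proving Theorem~\ref{th2g132} amounts to rewriting this cubic for $v_0$ as the claimed fixed-point equation for $f$, and no further combinatorial input is needed.

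First I would substitute $v_0=1/f$ into the cubic and clear denominators by multiplying through by $f^3$, which yields the companion cubic
\[
1-(1+2prx+pqx)f+rpx(prx-pqx+q^2x+2)f^2-p^2r^2x^2f^3=0.
\]
Next I would recast this as $f=1+px(\cdots)f$. The two outer-degree coefficients carry a clean factor of $px$: one has the identities $rpx(prx-pqx+q^2x+2)=px\cdot r(2+(pr-pq+q^2)x)$ and $p^2r^2x^2=px\cdot pr^2x$, so that after isolating the constant term $1$ and the bare linear term $f$, a single factor of $pxf$ can be extracted from the remaining three contributions. Collecting the $f^2$- and $f^3$-pieces then produces the summands $r(2+(pr-pq+q^2)x)f$ and $-pr^2xf^2$ inside the bracket, while the leftover portion of the linear coefficient supplies the bracket's constant term.

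The sole delicate point is the bookkeeping of that linear coefficient. After writing $-(1+2prx+pqx)f=-f-(2prx+pqx)f$ and moving $-f$ to the left to form $f-1$, the piece $(2prx+pqx)f$ must be recombined with the extracted $px$ to give the constant entry of the bracket, and it is precisely here that the sign of the $q$-contribution must be tracked with care. As a safeguard against a sign slip I would compare the whole passage line-by-line with the analogous reduction for the $123$ case in Theorem~\ref{th2f123}, where the identical step---from the cubic for $v_0$ to a fixed-point equation for $f$---is carried out and can serve as a template.
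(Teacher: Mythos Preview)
Your approach is exactly the paper's: substitute $v_0=1/f$ into the cubic satisfied by $v_0$, clear denominators, and rearrange into fixed-point form. The only concrete issue is that the cubic you quote (copied verbatim from the paper's intermediate display) carries a sign slip in the $v_0^2$-coefficient, and if you follow your plan mechanically from that equation you will obtain $-q-2r$, not $q-2r$, as the bracket constant. Expanding $K(v_0)=0$ directly, i.e.\ writing $(v_0-prx)^2(1-v_0)=pqx\bigl(v_0^2+r(q-p)x\,v_0\bigr)$ and collecting powers of $v_0$, one gets
\[
v_0^{3}-(1+2prx-pqx)\,v_0^{2}+prx\bigl(2+(pr-pq+q^{2})x\bigr)v_0-p^{2}r^{2}x^{2}=0,
\]
with $-pqx$ rather than $+pqx$. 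After the substitution $v_0=1/f$ and multiplication by $f^{3}$ this becomes
\[
1-(1+2prx-pqx)f+prx\bigl(2+(pr-pq+q^{2})x\bigr)f^{2}-p^{2}r^{2}x^{2}f^{3}=0,
\]
and now the linear piece splits as $-f-px(2r-q)f$, so that extracting $pxf$ leaves precisely the constant $q-2r$ inside the bracket, matching the theorem. Your caution that ``the sign of the $q$-contribution must be tracked with care'' was well placed; the care required is to recompute the cubic from $K(v_0)=0$ rather than rely on the displayed intermediate equation.
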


\subsubsection{Proof of Theorem \ref{mainthm00}} Let $h=xC_{132}(x,p,1,1)$. Then, Theorem \ref{th2g132} gives
$$h=\frac{x(1-ph+ph^2)}{(1-ph)^2},$$
which, by \eqref{eqg123g1}, proves that the number of $132$-avoiding Stirling permutations of order $n$ with exactly $k$ plateaus is the same as the number of $123$-avoiding Stirling permutations of order $n$ with exactly $k$ plateaus.

If we set $h=x(qC_{132}(x,1,q,1)-q+1)$, then Theorem \ref{th2g132} gives
\begin{align*}
h=\frac{x((1-h)^2+qh(1-h)+q^2h^2)}{(1-h)^2}.
\end{align*}
Then, by Lagrange Inversion Formula, we have that the coefficient of $x^n$ in $h$ is given by
$$[x^n]h=\frac{1}{n}[y^{n-1}]\left(1+q\frac{y}{1-y}+q^2\frac{y^2}{(1-y)^2}\right)^n,$$
which implies
$$[x^n]h=\frac{1}{n}\sum_{j=0}^n\sum_{i=0}^j\binom{n}{j}\binom{j}{i}\binom{n-2}{j+i-1}q^{j+i},$$
Thus, the number of  $132$-avoiding Stirling permutations of order $n$ with exactly $d$ descents is given by
$$[x^{n+1}q^{d+1}]h=\frac{\binom{n-1}{d}}{n+1}\sum_{j=0}^{n+1}\binom{n+1}{j}\binom{j}{d+1-j},$$
as required. \hfill$\Box$
\bigskip

Finally, we note that the generating function $h=xC_{132}(x,1,1,r)$ satisfies (see Theorem \ref{th2g132})
\begin{align*}
h=\frac{x(1+(1-2r)h+rh^2)}{(1-rh)^2},
\end{align*}
which, by Lagrange Inversion Formula, implies that the coefficient of $x^n$ in $h$ is given by
$$[x^n]h=\frac{1}{n}[y^{n-1}]\sum_{\ell\geq0}\sum_{j=0}^n\sum_{i=0}^j\binom{n}{j}\binom{j}{i}\binom{2n-1+\ell}{\ell}
r^{i+\ell}(1-2r)^{j-i}y^{j+i+\ell}.$$
Thus,  the generating function for the number of  $132$-avoiding Stirling permutations of order $n$ according to the number of ascents is given by
$$[x^{n+1}]h=\frac{1}{n+1}\sum_{j=0}^{n+1}\sum_{i=0}^j\binom{n+1}{j}\binom{j}{i}\binom{3n+1-j-i}{2n+1}r^{n-1-j}(1-2r)^{j-i}.$$

\section{Some combinatorial explanations}
\subsection{The case $213$} \label{comb213} The symmetry of $qrC_{n,213}(p, q, r)$ in $p,q$ and $r$ follows from a natural bijection $\varphi: \q_{n}(213)\mapsto \t_{n-1}$, where $\t_n$ is the set of $n$-edge ternary trees. To define $\varphi$, recall that each nonempty Stirling 213-avoider $\si$ is uniquely expressible as $\si' 1 \si'' 1 \si'''$ with $\si'>\si''>\si'''$ and $\si',\si'',\si'''$ all 213-avoiders. We define $\varphi$ recursively in 8 cases according as each of $\si',\si'',\si'''$ is empty or not. First, $\varphi(11)=\epsilon$, the empty ternary tree (one vertex, no edges). The other 7 cases are treated schematically below.

\begin{center}

\begin{pspicture}(-7.5,-1.6)(10,.8)
\psset{unit=.7cm}
\psdots(-10,-1.1)(-9,0)(-7,-1.1)(-7,0)(-5,0)(-4,-1.1)(-2,-1.1)(-1,0)(-1,-1.1)(1,-1.1)(2,0)(3,-1.1)(5,0)(5,-1.1)(6,-1.1)(8,-1.1)(9,0)(9,-1.1)(10,-1.1)

\psline(-10,-1.1)(-9,0)
\psline(-7,0)(-7,-1.1)
\psline(-5,0)(-4,-1.1)

\psline(-2,-1.1)(-1,0)(-1,-1.1)
\psline(1,-1.1)(2,0)(3,-1.1)
\psline(5,-1.1)(5,0)(6,-1.1)

\psline(8,-1.1)(9,0)(9,-1.1)
\psline(9,0)(10,-1.1)

\rput(-10,-1.51){\textrm{{\normalsize $\varphi \si'$}}}

\rput(-7,-1.51){\textrm{{\normalsize $\varphi \si''$}}}

\rput(-4,-1.51){\textrm{{\normalsize $\varphi \si'''$}}}

\rput(-2,-1.51){\textrm{{\normalsize $\varphi \si'$}}}
\rput(-1,-1.51){\textrm{{\normalsize $\varphi \si''$}}}

\rput(1,-1.51){\textrm{{\normalsize $\varphi \si'$}}}
\rput(3,-1.51){\textrm{{\normalsize $\varphi \si'''$}}}

\rput(5,-1.51){\textrm{{\normalsize $\varphi \si''$}}}
\rput(6,-1.51){\textrm{{\normalsize $\varphi \si'''$}}}

\rput(8,-1.51){\textrm{{\normalsize $\varphi \si'$}}}
\rput(9,-1.51){\textrm{{\normalsize $\varphi \si''$}}}
\rput(10,-1.51){\textrm{{\normalsize $\varphi \si'''$}}}

\rput(-9,1){\textrm{{\normalsize $\si' 1 1 $}}}
\rput(-7,1){\textrm{{\normalsize $1 \si'' 1 $}}}
\rput(-4,1){\textrm{{\normalsize $1 1 \si''' $}}}
\rput(-1,1){\textrm{{\normalsize $\si' 1 \si'' 1 $}}}
\rput(2,1){\textrm{{\normalsize $\si' 1 1\si'''  $}}}
\rput(5,1){\textrm{{\normalsize $1 \si'' 1 \si''' $}}}
\rput(9,1){\textrm{{\normalsize $\si' 1 \si'' 1 \si'''$}}}

\psset{dotsize=5pt 0}
\psdots(-9,0)(-7,0)(-5,0)(-1,0)(2,0)(5,0)(9,0)

\end{pspicture}
\end{center}
It is clear, by induction, that $\varphi$ is a bijection. Now let $A,P,D$ denote the statistics that count augmented ascents, plateaus, and augmented descents respectively in a Stirling permutation, and let $L,V,R$ denote the statistics that count left, vertical, and right edges respectively in a ternary tree. For $\si \in \q_n(213)$ and $\tau=\varphi(\si)$, it is easy to show by induction that
\[
L(\tau)=n-A(\si), \quad V(\tau)=n-P(\si), \quad R(\tau)=n-D(\si)\, .
\]
(For the base case $n=1$, $A,P,D$ all have the value 1 on $11 \in \q_{1}$ and $L,V,R$ all have the value 0 on the empty tree.)
Clearly, $L,V,R$ have a symmetric joint distribution on $\t_{n-1}$. Hence, $A,P,D$ likewise
have a symmetric joint distribution on $\q_n(213)$.


\subsection{The case $123$} \label{comb123} To explain the symmetry of $qC_{n,123}(p, q, r)$
in $p$ and $q$, we give a bijection from $\q_{n}(123)$ to
a suitable set $\a_n$, together with an involution on $\a_n$ that obviously interchanges the statistics corresponding to ``number of augmented descents'' and ``number of plateaus''.

A permutation $p \in \s_n$ determines a composition $c(p)$ of $n$: the distances between successive left-to-right (LR for short) minima in $p0$ ($=p$ with an appended 0). A composition $c=(c_1,c_2,\dots,c_k)$ determines a set of integer sequences $S(c):=\{(s_1,s_2,\dots,s_k):\, 1\le s_i \le c_i \textrm{ for all } i\}$. Set $\a_n=\{(p,s):\, p \in \s_n(123),\ s \in S(c(p))\}$. There is an obvious involution on $\a_n$:
$(p,s) \mapsto (p,c(p)+1-s)$. For example, $p=(4,6,5,2,1,3)$ has LR minima 4,2,1 and $c(p)=(3,1,2)$ and the involution sends $\big(p,(3,1,1)\big)$ to $\big(p,(1,1,2)\big)$.

A Stirling permutation $\si \in \q_n$ determines a permutation $p(\si) \in \s_n$ given by the first occurrences of the letters in $\si$.

Now we define a mapping $\psi:\ \q_{n} \mapsto \a_n$. Given $\si \in \q_{n}$, let $m_1,\dots,m_k$ denote the successive LR minima in $p(\si)$, and let $s_i$ be the number of distinct letters in the subword of $\si$ bounded by the two occurrences of $m_i$. Set $s=(s_1,s_2,\dots,s_k)$, and $\psi(\si) = (p(\si),s)$. Then
the restriction $\psi|_{\q_{n}(123)}$ is the desired bijection from $\q_{n}(123)$ to $\a_n$.

To show this works, let us consider an example. Let $\si \in \q_{n}(123)$ and so, consequently, $p(\si) \in \mathfrak{S}_{n}(123)$, and suppose $p(\si)=$
\[
11 \ 12 \quad \  7 \ 10 \ 9 \quad \  4 \quad \  3 \quad \  1 \ 8 \ 6 \ 5 \ 2\, ,
\]
where we have inserted some space before each LR minimum. The spaces divide
$p(\si)$ into segments whose lengths form $c(p(\si))$. Since $p(\si)$ avoids 123,
the non-initial entries of all the segments are decreasing left to right. The Stirling property then forces
a plateau at each non-initial entry of a long segment (length $\ge 2$) and at each short segment
(length $= 1$):
\[
11 \ 12 \ 12 \quad \  7 \ 10 \ 10 \ 9 \ 9 \quad \  4 \ 4 \quad \  3 \ 3 \quad \  1 \ 8 \ 8 \ 6 \ 6 \ 5 \ 5 \ 2 \ 2
\]
As for each initial entry (\,= LR minimum) $m$, its second appearance must occur in its own segment (otherwise, $m\dots x \dots m$ appears with $x<m$) and it cannot split a plateau, but is otherwise unrestricted. Thus, for example, the second 7 may occur right after the first 7 (and 77 contains 1 distinct entry) or after the last 10 (and 7 10 10 7 contains 2 distinct entries) or after the last 9 (and 7 10 10 9 9 7 contains 3 distinct entries). In general, the number of choices to place the second occurrence of a LR minimum $m_i$ is the length $c_i$ of its segment. The validity of the bijection is now clear.

Next, there is a plateau at each short segment, at each non-initial entry in a long segment
and for each instance of $s_i=1$ (which means $m_i$ contributes a plateau).
So the number of plateaus corresponds to $ n -\#$ segments + $\v \{i:\ s_i=1\}\v$.
Similarly, there is an augmented descent after the plateau generated by each
short segment, after the plateau generated by each non-initial entry in a long segment and for each
instance of $s_i=c_i$ (which means the second occurrence of $m_i$ starts an additional augmented descent). So the number of augmented descents corresponds to $ n - \#$  segments + $\v \{i:\ s_i=c_i \}\v$.
The involution on $\a_n$ clearly interchanges these statistics.
\subsection{A further bijection} We now use $\a_n$ as an intermediate construct to give a bijection from $\q_n(123)$ to
a more appealing class of objects, denoted $\fc_n$, which we now define. A \emph{favorite-child} (FC) \emph{ordered tree} is an
(unlabeled) ordered tree in which each parent (non-leaf) vertex has a distinguished child edge or,
more picturesquely, a designated favorite child. Let $\fc_n$ denote the set of $n$-edge FC ordered trees.
It is convenient to introduce what we call the \emph{left-path} labeling of the vertices in an ordered tree,
defined recursively as follows.
\begin{itemize}
\item Place label 0 on the root.

\item Take the smallest labeled vertex $v$ with an unlabeled child (initially $v=0$). Successively
label the vertices in the leftmost path from each unlabeled child of $v$ (taken left to right) with the smallest unused label.

\item Repeat until all vertices are labeled.

\end{itemize}

\begin{center}

\begin{pspicture}(-3,-1.8)(4,5.5)
\psset{xunit=.7cm}
\psset{yunit=.5cm}
\psdots(-4,2)(-2,2)(0,0)(0,2)(2,2)(4,2)(-2,4)(-2,6)(1,4)(3,4)(1,6)(3,6)(0,8)(1,8)(2,8)(1.5,10)(2.5,10)
\psline(-4,2)(0,0)(-2,2)(-2,4)(-2,6)
\psline(0,2)(0,0)(2,2)(1,4)(1,6)(0,8)
\psline(1,8)(1,6)(2,8)(1.5,10)
\psline(0,0)(4,2)
\psline(2,8)(2.5,10)
\psline(2,2)(3,4)(3,6)

\rput(0,-0.5){0}
\rput(0,-0.5){0}
\rput(-4,2.5){1}
\rput(-2.4,2){2}
\rput(-2.4,4){3}
\rput(-2.4,6  ){4}
\rput(-.4,2  ){5}
\rput(2.3, 1.9 ){6}
\rput(0.7,4  ){7}
\rput(0.7,5.8  ){8}
\rput(-.3,8.2){9}
\rput(4, 2.5 ){10}

\rput(0,-2){ left path labeling -- first pass}
\rput(0,-3.3){ a)}
\psset{dotsize=5pt 0}
\psdots(0,0)

\end{pspicture}
\begin{pspicture}(-3,-1.8)(3,5.5)
\psset{xunit=.7cm}
\psset{yunit=.5cm}
\psdots(-4,2)(-2,2)(0,0)(0,2)(2,2)(4,2)(-2,4)(-2,6)(1,4)(3,4)(1,6)(3,6)(0,8)(1,8)(2,8)(1.5,10)(2.5,10)
\psline(-4,2)(0,0)(-2,2)(-2,4)(-2,6)
\psline(0,2)(0,0)(2,2)(1,4)(1,6)(0,8)
\psline(1,8)(1,6)(2,8)(1.5,10)
\psline(0,0)(4,2)
\psline(2,8)(2.5,10)
\psline(2,2)(3,4)(3,6)

\rput(0,-0.5){0}
\rput(0,-0.5){0}
\rput(-4,2.5){1}
\rput(-2.4,2){2}
\rput(-2.4,4){3}
\rput(-2.4,6  ){4}
\rput(-.4,2  ){5}
\rput(2.3, 1.9 ){6}
\rput(0.7,4  ){7}
\rput(0.7,5.8  ){8}
\rput(-.3,8.2){9}
\rput(4, 2.5 ){10}

\rput(3.4,4.2){11}
\rput(3.4,6.2){12}
\rput(1,8.5){13}
\rput(2.4,8  ){14}
\rput(1.5,10.5  ){15}
\rput(2.5,10.5  ){16}

\rput(0,-2){ final result}
\rput(0,-3.3){ b)}
\psset{dotsize=5pt 0}
\psdots(0,0)

\end{pspicture}

\end{center}
\centerline{Figure 1}

\vspace*{4mm}

\noindent For the ordered tree pictured in Figure 1 above, the labels generated from $v=0$ are shown on the left, the second pass uses $v=6$, and the full left-path labeling is shown on the right.

There are several known bijections from 321-avoiding permutations to Dyck paths,
equivalently, under reversal of permutations and the ``glove'' identification of Dyck paths and ordered trees, from 123-avoiding permutations to ordered trees.
(See \cite{Callan07,Claesson08} for two surveys of these bijections.)
Here, though, we need an apparently new one. Define $\rho:\, \msn(123) \mapsto \o_n$, the set of $n$-edge
ordered trees, as follows. Given $p \in \msn(123)$, split $p$ into segments, each starting at a
LR minimum of $p$. Form a tree on the vertex set $[0,n]$ by, for each segment, joining all its
entries to $m-1$ where $m$ is the first entry of the segment, as illustrated by example below
(the LR minimum segments are underlined).
\[
\begin{matrix}
\ \underbar{15\ 16}\ & \ \underbar{12}\ & \ \underbar{9\ 14\ 13}\ & \ \underbar{8}\ & \ \underbar{7\ 11}\ & \ \underbar{4}\ & \  \underbar{3}\ & \ \underbar{1\ 10\ 6\ 5\ 2}\\
\downarrow & \downarrow  & \downarrow & \downarrow & \downarrow & \downarrow & \downarrow & \downarrow \\
14 & 11 & 8 & 7 & 6 & 3 & 2 & 0
\end{matrix}
\]
These edges clearly form a tree; root it at 0. Then order the edges so that the children of each parent vertex are increasing left to right. (The result for this example is the tree shown in Figure 1b). Finally, erase all the labels to get the desired ordered tree.
To reverse the map, label the vertices of the tree in left-path order. The LR minima can then be retrieved: take the leftmost child of each   parent vertex. The length of the segment containing a LR minimum $v$ can also easily be retrieved as the family size (number of children) of the parent of $v$.  A 123-avoiding permutation is determined by its LR minima and their locations (all other entries decrease left to right), and so the original permutation can be recovered.

The efficacy of this bijection is that it takes the lengths of the LR minimum segments (visited right to left) to the family sizes of the parent vertices (visited in left-path order). A bijection from $\a_n$ to $\fc_n$ is now clear: for $(p,s) \in \a_n$, use $\rho(p)$ as the underlying ordered tree and use $s$ to designate the favorite child of each parent vertex. The involution on $\a_n$ that establishes the equidistribution of descents and plateaus in $\q_n(123)$ then becomes ``reverse the age ranking of each favorite child'',
i.e., change it from $i$-th (say) oldest to $i$-th youngest.

\subsection{Cases $123$ and $132$}
To see why plateaus have the same distribution on $\q_{n}(123)$ and $\q_{n}(132)$, observe that,
by considerations entirely analogous to those for the map $\psi|_{\q_{n}(123)}$ in Section \ref{comb123},
$\psi|_{\q_{n}(132)}$ is also a bijection, this time from $\q_{n}(132)$ to $\a_n$, and it also
carries ``number of plateaus'' to $ n -\#$ segments + $\v \{i:\ s_i=1\}\v$.

\section{Further results}
In this section we consider Stirling permutations that avoid 213 and another pattern (motivated by the study of  avoiding two patterns $132,\tau$ in permutations, see \cite{MV1} and references therein).
Let $\mqn(\tau_1,\tau_2)$ denote the set of Stirling permutations of order $n$ that avoid the patterns $\tau_1$ and $\tau_2$. For a pattern $\tau$, we define $$F_\tau=F_\tau(x,p,q,r)=\sum_{n\geq0}x^n\sum_{\sigma\in \mqn(213, \tau)}p^{\plat(\sigma)}q^{\des(\sigma)}r^{\asc(\sigma)}.$$
For patterns $\tau=(\tau_1,\dots,\tau_k)$ and $\tau'=(\tau'_1,\dots,\tau'_{k'})$, let $\tau \oplus \tau'$ denote their ``disjoint  concatenation'' $(\tau_1,\dots,\tau_k,m+\tau'_1,\dots,m+\tau'_{k'})$, where $m$ is the largest letter of $\tau$. Thus $11\oplus 121=11232$.

\begin{theorem}\label{thfur1}
Let $\tau=1\oplus \tau'$ where $\tau'$ is some pattern. Then, the generating function $F_\tau(x,p,q,r)$ is given by
\begin{align*}
F_\tau(x,p,q,r)=1+\frac{xp+xr(p+q)(F_{\tau'}(x,p,q,r)-1)+xqr^2(F_{\tau'}(x,p,q,r)-1)^2}
{1-xpq-xqr(1+p)(F_{\tau'}(x,p,q,r)-1)-xq^2r^2(F_{\tau'}(x,p,q,r)-1)^2}.
\end{align*}
\end{theorem}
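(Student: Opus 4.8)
The plan is to imitate, with the extra restriction imposed, the block‑decomposition computation that yielded the functional equation for $C_{213}(x,p,q,r)$ displayed just before Theorem~\ref{lem123a4}. First I would recall the canonical form of a nonempty $213$‑avoiding Stirling permutation, $\sigma=\sigma'1\sigma''1\sigma'''$ with $\sigma'>\sigma''>\sigma'''$ and each factor again a $213$‑avoider, and then determine what avoidance of $\tau=1\oplus\tau'$ adds. The guiding observation is that the leading letter of $\tau$ is its unique minimum, sitting just to the left of a copy of $\tau'$; since the first occurrence of the global minimum $1$ of $\sigma$ lies exactly at the junction between $\sigma'$ and $\sigma''$, any copy of $\tau'$ among the entries to its right (all of which exceed $1$) completes a copy of $\tau$. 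Arguing by cases on the location of the distinguished ``$1$''‑letter, I would show that $\sigma$ avoids $\tau$ precisely when the top block $\sigma'$ avoids $\tau$ and the material carried by the two lower blocks avoids $\tau'$; occurrences whose $\tau'$‑part straddles $\sigma'$ and the lower blocks are excluded by the value inequalities $\sigma'>\sigma''>\sigma'''$, which keep the required small ``$1$''‑letter from existing.

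Next I would convert this structural dichotomy into a generating‑function identity. Running through the eight cases according to which of $\sigma',\sigma'',\sigma'''$ are empty—exactly as in the unrestricted computation—I would attach the boundary weights ($p$ for a plateau created by two adjacent $1$'s, $q$ for a descent from a block down to a $1$, $r$ for an ascent from a $1$ up into a block, and one $x$ for the value‑$1$ pair), letting the top block contribute $F_\tau-1$ since it must avoid $\tau$ and each lower block contribute $F_{\tau'}-1$ since it must avoid $\tau'$. Grouping the cases by whether $\sigma'$ is empty, the empty‑top contributions assemble into the numerator, while in the nonempty‑top cases $\sigma'$ is itself an arbitrary member of $\mqn(213,\tau)$, so these contributions carry a factor $F_\tau-1$ and assemble into $(F_\tau-1)$ times the complement of the denominator. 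This yields a relation that is \emph{linear} in $F_\tau$, of the shape $F_\tau-1=A+B\,(F_\tau-1)$ with $A$ and $B$ polynomials in $x,p,q,r$ and $F_{\tau'}-1$; isolating $F_\tau$ gives the claimed rational expression, and I would check the coefficients of $x^1$ and $x^2$ against direct enumeration to confirm the bookkeeping.

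The step I expect to be genuinely delicate—and the place where the precise coefficients $xr(p+q)$, $xqr^2$, $xqr(\cdot)$, $xq^2r^2$ are decided—is the treatment of the two lower blocks. When both $\sigma''$ and $\sigma'''$ are nonempty they are separated only by the inner $1$, and the condition that must be recorded is that the \emph{concatenated} lower material $\sigma''\sigma'''$ avoid $\tau'$. Because $\sigma''$ lies entirely above $\sigma'''$ in value, this concatenation is a skew sum, so the question is whether its avoidance of $\tau'$ factors through the two blocks separately; it is precisely this factorization that produces the clean terms $(F_{\tau'}-1)$ and $(F_{\tau'}-1)^2$ rather than some further auxiliary series. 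Establishing this reduction for the lower two blocks is the crux of the argument, after which the remaining work—solving the linear relation and recording $F_\tau=1$ for the empty permutation together with the small initial values—is routine.
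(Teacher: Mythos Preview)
Your plan is exactly the paper's proof: the same block decomposition $\sigma=\sigma'1\sigma''1\sigma'''$, the same eight cases, and the same linear equation in $F_\tau-1$ solved at the end. You have also put your finger on the one genuinely delicate step.

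The trouble is that this step --- the factorisation of $\tau'$-avoidance over the two lower blocks --- actually \emph{fails} for general $\tau'$, so neither your sketch nor the paper's argument goes through as stated. The concatenation $\sigma''\sigma'''$ is a skew sum (every letter of $\sigma''$ exceeds every letter of $\sigma'''$), so an occurrence of $\tau'$ can straddle the two blocks whenever $\tau'$ itself is skew-decomposable. Concretely, take $\tau'=21$, so $\tau=1\oplus 21=132$, and $\sigma=133122\in\mathcal{Q}_3(213)$: here $\sigma'=\emptyset$, $\sigma''=33$, $\sigma'''=22$, each block avoids $21$, yet $\sigma$ contains $132$ via the subsequence $1,3,2$. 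Direct enumeration gives $|\mathcal{Q}_3(213,132)|=7$, while the displayed formula (with $F_{21}-1=xp/(1-xpr)$) gives $8$ at $p=q=r=1$. Thus the theorem as stated is false; it becomes correct once one adds the hypothesis that $\tau'$ is skew-indecomposable, which all of the paper's examples ($\tau'=11,122,1233,\dots$) satisfy. So the ``crux'' you isolate is not something you will be able to establish in general --- it is the place where an extra hypothesis is needed.

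One further wrinkle you will meet when you do the bookkeeping: the two cases in which $\sigma'$ and exactly one of $\sigma'',\sigma'''$ are nonempty contribute $xq^2r$ and $xpqr$ respectively, for a combined $(F_\tau-1)(F_{\tau'}-1)$-coefficient of $xqr(p+q)$, not $xqr(1+p)$ as printed. The two expressions coincide at $p=q=r=1$, so the plain counts in the paper's examples are unaffected, but the discrepancy is visible already in $[x^3]F_{1233}(p,q,r)$.
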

\begin{proof}
Let us write an equation for the generating function $F_{\tau}(x,p,q,r)$.
Note that each nonempty Stirling permutation $\sigma$ that avoids both $213$ and $\tau$ can be represented as $\sigma'1\sigma''1\sigma'''$ such that
\begin{itemize}
\item each letter of $\sigma'$ is greater than each letter of $\sigma''$;
\item each letter of $\sigma''$ is greater than each letter of $\sigma'''$;
\item $\sigma'$ is a Stirling permutation that avoids both $213$ and $\tau$;
\item $\sigma'',\sigma'''$ are Stirling permutations that avoid both $213$ and $\tau'$.
\end{itemize}
Hence, by considering the $8$ possibilities of either one of $\sigma',\sigma'',\sigma'''$ is empty or not, we obtain that the generating function $F_{\tau}(x,p,q,r)$ satisfies
\begin{align*}
F_\tau(x,p,q,r)&=1+xp+xpq(F_\tau-1)+x(p+q)r(F_{\tau'}-1)+xqr(1+p)(F_\tau-1)(F_{\tau'}-1)\\
&+xr^2q(F_{\tau'}-1)^2+xq^2r^2(F_\tau-1)(F_{\tau'}-1)^2,
\end{align*}
which, by solving for $F_\tau(x,p,q,r)-1$, implies the required result.
\end{proof}

\begin{example}
Let $\tau=122 = 1\oplus \tau'$ with $\tau'=11$. Clearly, $F_{\tau'}=1$ since it is very difficult for a Stirling permutation to avoid a repeated letter. Thus, Theorem~\ref{thfur1} gives
\begin{align*}
F_{122}=F_{122}(x,p,q,r)&=1+\frac{xp}{1-xpq}=1+\sum_{j\geq0}x^{j+1}p^{j+1}q^j=1+\sum_{j\geq1}x^jp^jq^{j-1}.
\end{align*}
For $\tau=1233=1 \oplus 122$, Theorem~\ref{thfur1} gives
\begin{align*}
F_{1233}-1&=\frac{xp+xr(p+q)(F_{122}-1)+xqr^2(F_{122}-1)^2}
{1-xpq-xqr(1+p)(F_{122}-1)-xq^2r^2(F_{122}-1)^2}.
\end{align*}
In particular, $F_{1233}(x,1,1,1)=\frac{(1-x)^2}{1-3x+x^2}$, that is, the number of Stirling permutations of $\mqn(213,1233)$ is given by the $2n$-th Fibonacci number (the $n$-th Fibonacci number is defined by $a_0=0$, $a_1=1$ and $a_n=a_{n-1}+a_{n-2}$).
Applying Theorem~\ref{thfur1} repeatedly, we obtain
\begin{align*}
F_{12344}(x,1,1,1)&=\frac{(1-3x+x^2)^2}{1-7x+15x^2-12x^3+5x^4-x^5},\\
F_{123455}(x,1,1,1)&=\frac{(1-7x+15x^2-12x^3+5x^4-x^5)^2}{(1-x)(1-14x+77x^2-215x^3+332x^4-295x^5+157x^6-51x^7+10x^8-x^9)}.
\end{align*}
\end{example}

By Theorem~\ref{thfur1}, we obtain that the generating function $F_{123\cdots k(k+1)(k+1)}(x,p,q,r)$ is a rational function.

\begin{theorem}\label{thfur2}
Let $\tau=11 \oplus \tau'$ where $\tau'$ is some pattern. Then, the generating function $F_\tau(x,p,q,r)$ is given by
\begin{align*}
F_\tau=\frac{-b-\sqrt{b^2-4ac}}{2a},
\end{align*}
where
\begin{align*}
a&=qrx(1+qr(F_{\tau'}-1)),\\
b&=-1-qx(r-p)-qrx(2qr-p-r)(F_{\tau'}-1),\\
c&=1+xp(1-q)+rx(p+q^2r-qp-qr)(F_{\tau'}-1).
\end{align*}
\end{theorem}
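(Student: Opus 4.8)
The plan is to mimic the block-decomposition argument that proved Theorem~\ref{thfur1}, but now adapted to the pattern $\tau = 11 \oplus \tau'$. The key structural fact I would use is the same canonical decomposition of a nonempty $213$-avoider: every $\sigma \in \mqn(213,\tau)$ factors uniquely as $\sigma'1\sigma''1\sigma'''$ with $\sigma' > \sigma'' > \sigma'''$ (as letter-sets) and each block a $213$-avoider. The crucial difference from the $1 \oplus \tau'$ case lies in which blocks must additionally avoid $\tau$ versus $\tau'$. Since $\tau$ begins with $11$ (a repeated smallest letter), and the two copies of the value $1$ in the decomposition $\sigma'1\sigma''1\sigma'''$ play the role of that initial $11$, I expect the constraint to propagate differently: $\sigma'$ should avoid the full pattern $\tau = 11\oplus\tau'$, while the ``inner'' and ``outer'' blocks $\sigma'',\sigma'''$ need only avoid $\tau'$. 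I would first pin down this precise avoidance bookkeeping, checking each of the $8$ emptiness cases to confirm that an occurrence of $\tau$ in $\sigma$ forces either an occurrence of $\tau$ entirely within $\sigma'$, or uses the two copies of $1$ together with an occurrence of $\tau'$ in the later blocks.

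Once the avoidance structure is fixed, the generating-function equation writes itself by the same weighting used earlier: each of $\sigma',\sigma'',\sigma'''$ contributes either the empty-block weight or a factor of $(F_\tau-1)$ respectively $(F_{\tau'}-1)$, multiplied by the appropriate powers of $p,q,r$ recording the plateaus, descents, and ascents created at the two inserted $1$'s and at the block boundaries. Because $\sigma'$ now carries the same pattern restriction as $\sigma$ itself, the decomposition becomes genuinely recursive in $F_\tau$: I expect to obtain a \emph{quadratic} equation in $F := F_\tau$ of the form
\[
a F^2 + b F + c' = 0,
\]
where the coefficients collect the contributions from the various emptiness cases and depend on $x,p,q,r$ and on $F_{\tau'}$. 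This is the structural reason the answer is a square-root rather than the rational expression of Theorem~\ref{thfur1}: there, $\sigma'$ avoided $\tau'$ (a shorter pattern, already known), so the equation was linear in $F_\tau$; here $\sigma'$ avoids $\tau$ itself, producing a true self-reference and hence a genuine quadratic.

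To finish, I would set up the quadratic with leading coefficient $a = qrx\bigl(1+qr(F_{\tau'}-1)\bigr)$, verify that the linear and constant coefficients simplify to the stated $b$ and $c$, and then solve by the quadratic formula. The sign on the square root is forced by the initial condition $F_\tau(0,p,q,r)=1$ (the empty Stirling permutation), which selects the branch $\frac{-b-\sqrt{b^2-4ac}}{2a}$; I would confirm this by checking the constant term of the power-series expansion in $x$, noting that as $x \to 0$ one has $a \to 0$ and $c \to 1$, so the correct root tends to $1$ rather than blowing up.

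The main obstacle I anticipate is the careful case analysis establishing exactly which blocks inherit the $\tau$-restriction and which inherit only $\tau'$. In particular I must verify that an occurrence of $\tau=11\oplus\tau'$ cannot ``straddle'' the blocks in an unexpected way --- for instance, that no occurrence places its two smallest (equal) letters in different blocks in a manner not captured by the two inserted copies of $1$. Getting this emptiness/avoidance bookkeeping correct across all $8$ cases is the delicate step; once it is settled, reading off the quadratic and applying the quadratic formula is routine algebra.
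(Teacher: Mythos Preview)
Your overall strategy---block decomposition $\sigma = \sigma'1\sigma''1\sigma'''$ followed by the quadratic formula---is exactly the paper's. However, your avoidance bookkeeping is wrong, and this is precisely the ``delicate step'' you flagged. You claim $\sigma''$ need only avoid $\tau'$, but in fact $\sigma''$ must avoid the full pattern $\tau$. An occurrence of $\tau = 11\oplus\tau'$ in $\sigma$ consists of two positions carrying a common value $v$ followed, strictly to the right of both, by an occurrence of $\tau'$ in values $>v$. If $v=1$, those two positions are the two literal $1$'s of $\sigma$, so the $\tau'$ part must lie after the second $1$, i.e.\ entirely in $\sigma'''$---never in $\sigma''$. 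If $v\ge 2$, the Stirling condition places both copies of $v$ in a single block, and since $\sigma'>\sigma''>\sigma'''$ elementwise, the remaining $\tau'$ part (values $>v$) is trapped in that same block. Hence the correct constraints are: $\sigma'$ and $\sigma''$ each avoid $\tau$, while only $\sigma'''$ need avoid $\tau'$. (Concretely, with $\tau'=11$ your rule would wrongly exclude $331221\in\mathcal{Q}_3(213,1122)$, since its middle block $\sigma''=22$ contains $\tau'$; but $331221$ does avoid $1122$.)

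This error also makes your plan internally inconsistent. With only $\sigma'$ carrying the $\tau$-restriction, the resulting functional equation would be \emph{linear} in $F_\tau$---exactly the structure of Theorem~\ref{thfur1}, which produced a rational function, not a square root. The quadratic you anticipate arises precisely because \emph{two} of the three blocks, $\sigma'$ and $\sigma''$, contribute factors of $(F_\tau-1)$: the cases $\sigma'1\sigma''1$ and $\sigma'1\sigma''1\sigma'''$ then supply the $(F_\tau-1)^2$ terms whose presence forces the quadratic formula and the stated square-root expression.
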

\begin{proof}
Let us write an equation for the generating function $F_{\tau}(x,p,q,r)$.
Note that each nonempty Stirling permutation $\sigma$ that avoids both $213$ and $\tau$ can be represented as $\sigma'1\sigma''1\sigma'''$ such that
\begin{itemize}
\item each letter of $\sigma'$ is greater than each letter of $\sigma''$;
\item each letter of $\sigma''$ is greater than each letter of $\sigma'''$;
\item $\sigma',\sigma''$ is a Stirling permutation that avoids both $213$ and $\tau$;
\item $\sigma'''$ are Stirling permutations that avoid both $213$ and $\tau'$.
\end{itemize}
Hence, by considering the $8$ possibilities of either one of $\sigma',\sigma'',\sigma'''$ is empty or not, we obtain that the generating function $F_{\tau}(x,p,q,r)$ satisfies
\begin{align*}
F_\tau(x,p,q,r)&=1+xp+x(p+r)q(F_\tau-1)+xpr(F_{\tau'}-1)+xqr(F_\tau-1)^2\\
&+xqr(p+r)(F_{\tau}-1)(F_{\tau'}-1)+xq^2r^2(F_{\tau'}-1)(F_{\tau}-1)^2,
\end{align*}
which, by solving for $F_\tau(x,p,q,r)-1$, implies the required result.
\end{proof}

\begin{example}
Let $\tau=1122 = 11 \oplus 11$. Since $F_{11}=1$, Theorem \ref{thfur2} gives
$$F_{1122}(x,p,q,r)=\frac{1-xqp+xqr-\sqrt{x^2q^2p^2-2xqp+2x^2q^2pr+1-2xqr+x^2q^2r^2-4x^2qrp}}{2qrx},$$
which leads to $F_{1122}(x,1,1,1)=C(x)$,  where $C(x)=\frac{1-\sqrt{1-4x}}{2x}$ is the generating function for the Catalan numbers.

Using Theorem \ref{thfur2} once more, we have
$$F_{112233}(x,1,1,1)=\frac{1-\sqrt{2\sqrt{1-4x}-1}}{1-\sqrt{1-4x}}=C(xC(x))$$
and
$$F_{11223344}(x,1,1,1)=C(xC(xC(x))).$$
By induction on $k$, we obtain that $F_{1122\cdots kk}(x,1,1,1)=C(xC(xC(x\cdots C(xC(x)))))$, where $C$ is used exactly $k-1$ times.
\end{example}

As a final example, let us count the occurrences of the pattern $122$ in $\mqn(213)$ (motivated by the study of counting occurrences of the pattern $12\cdots k$ in a $132$-avoiding permutation, for example see \cite{MV0,RZW}). To do so, we denote the number occurrences of the pattern $122$ in $\sigma$ by $122(\sigma)$. We define $R(x,p,z)$ to be the generating function for the number of Stirling permutations of $\mqn(213)$ according to the occurrences of plateaus and occurrences of the pattern $122$, namely,
$$R(x,p,z)=\sum_{n\geq0}x^n\sum_{\sigma\in\mqn(213)}p^{\plat(\sigma)}z^{122(\sigma)}.$$
By the $8$ possibilities of block decompositions in the proof of Theorem \ref{thfur1}, we obtain
$$R(x,p,z)=1+xpR(x,p,z)R(x,pz^2,z)+xR(x,p,z)(R(x,pz,z)-1)R(x,pz^2,z),$$
which implies
$$R(x,p,z)=\frac{1}{1-x(R(x,pz,z)-1+p)R(x,pz^2,z)}.$$
The first terms of the generating function $R(x,p,z)$ are $1$, $px$,  $p({p}{z}^{2}+z+{p})$,
$p( {p}^{2}{z}^{6}+2\,p{z}^{3}+{p}^{2}{z}^{4}+p{z}^{4}+{z}^{2
}+p{z}^{2}+2\,{p}^{2}{z}^{2}+2\,pz+{p}^{2})$ and  $p( {p
}^{3}+7\,{p}^{2}{z}^{3}+3\,p{z}^{2}+2\,p{z}^{3}+2\,{p}^{2}{z}^{2}+3\,{
p}^{3}{z}^{4}+3\,{p}^{3}{z}^{2}+3\,{p}^{2}z+3\,{p}^{2}{z}^{4}+3\,{p}^{
3}{z}^{6}+4\,p{z}^{4}+p{z}^{6}+{z}^{3}+2\,{p}^{2}{z}^{6}+4\,{p}^{2}{z}
^{7}+5\,{p}^{2}{z}^{5}+{p}^{3}{z}^{10}+2\,{p}^{3}{z}^{8}+{p}^{2}{z}^{8
}+2\,p{z}^{5}+{p}^{2}{z}^{9}+{p}^{3}{z}^{12})$.

\section*{Acknowledgements.}
The second author was supported by NSFC (11401083) and the Fundamental Research Funds for the Central Universities (N152304006).
The authors would like to thank referees for their valuable suggestions.

\end{document}